\newtheorem*{maintheorem*}{Main Theorem}
\newtheorem{theorem}{Theorem}[section]
\newtheorem{prop}[theorem]{Proposition}
\newtheorem{lemma}[theorem]{Lemma}
\newtheorem{cor}[theorem]{Corollary}
\theoremstyle{definition}
\newtheorem{definition}[theorem]{Definition}
\newtheorem{remark}[theorem]{Remark}
\newtheorem{example}[theorem]{Example}
\numberwithin{equation}{section}
\newcommand{\nn}{\mathbb{N}}
\newcommand{\qq}{\mathbb{Q}}
\keywords{Puiseux monoids, Puiseux semirings, rational cyclic semirings, atomicity, atomic monoids, ACCP}
\subjclass[2010]{Primary: 20M13; Secondary: 16Y60, 06F05, 20M14}
\begin{document}
	
	\mbox{}
	\title{On the atomic structure of exponential Puiseux monoids and semirings}

	\author[1]{Sof\'ia Albizu-Campos}
	\address{Facultad de Matem\'atica y Computaci\'on, Universidad de La Habana, San L\'azaro y ~L, Vedado, Habana 4, CP-10400, Cuba}
	\email{sofiaalbizucampos@gmail.com}
	
	\author[2]{Juliet Bringas}
	\address{Facultad de Matem\'atica y Computaci\'on, Universidad de La Habana, San L\'azaro y~L, Vedado, Habana 4, CP-10400, Cuba}
	\email{julybm01@gmail.com}
	
	\author[3]{Harold Polo}
	\address{Department of Mathematics\\University of Florida\\Gainesville, FL 32611, USA}
	\email{haroldpolo@ufl.edu}
	
	\date{\today}

	\begin{abstract}
		We say that a Puiseux monoid is exponential provided that it is generated by some of the powers of a rational number. Here we study the atomic properties of exponential Puiseux monoids and semirings. First, we characterize atomic exponential Puiseux monoids, and we prove that the finite factorization property, the bounded factorization property, and the ACCP coincide in this context. Then we proceed to offer necessary and sufficient conditions for an exponential Puiseux monoid to satisfy the ACCP. We conclude by describing the exponential Puiseux monoids that are semirings. 
	\end{abstract}
	
	\maketitle

\section{Introduction} \label{sec:intro}

An integral domain $R$ is atomic provided that every nonzero nonunit element is a product of finitely many atoms (i.e., irreducibles) of $R$, and it satisfies the ACCP if for each sequence $(x_n)_{n \geq 1}$ in $R$ satisfying that $x_{n + 1}$ divides $x_n$ in $R$ for each $n \in\nn$, there exists $k \in \nn$ such that $x_n$ and $x_k$ are associates for each $n \geq k$. Clearly, an integral domain $R$ satisfying the ACCP is also atomic. Indeed, take a nonzero nonunit element $x_0 \in R$. Then either $x_0$ is an atom or it can be written as $x_0 = x_1x_2$, where neither $x_1$ nor $x_2$ are units of $R$. If both elements $x_1$ and $x_2$ are atoms then we stop; otherwise, either $x_1$ or $x_2$ is not an atom of $R$, and we can repeat the process by factoring that element. The fact that $R$ satisfies the ACCP forces this procedure to stop, which means that every nonzero nonunit element is a product of atoms of $R$. 

In 1974, A. Grams disproved P. Cohn's assumption that every atomic integral domain satisfies the ACCP~\cite{Grams}. Since then, several papers have studied the interplay between the atomic property and the ACCP in the context of integral domains (see, for instance,~\cite{DDDFMZ1990,Zaks1982}). We can investigate these properties not only for integral domains but in the more general context of commutative and cancellative monoids, where it is not hard to find examples of atomic monoids that do not satisfy the ACCP (see, for example,~\cite[Corollary 5.5]{CGG2019}). Additionally, atomic monoids not satisfying the ACCP play an important role in understanding when being atomic is transferred from a commutative monoid $M$ to its monoid ring $R[M]$, a question posed by R. Gilmer in~\cite[page 189]{Gilmer} and partially answered by M. Roitman in~\cite{Roitman} and J. Coykendall and F. Gotti in~\cite{JCFG2019}.

In this article, we study the atomic structure of exponential Puiseux monoids, that is, additive submonoids of the nonnegative cone of rational numbers $\qq$ generated by some of the powers of a positive rational number. These monoids are a generalization of rational cyclic monoids (which are also semirings), which were introduced in~\cite{GG17} and deeper studied in~\cite{CGG19}. The class of exponential Puiseux monoids provides a fertile ground to study the interplay between the atomic property and the ACCP: this class consists mostly of atomic monoids and contains a large subclass of atomic monoids that do not satisfy the ACCP. 

We begin the next section by introducing not only the necessary background but also the notation we shall be using throughout this paper. In Section~3, we characterize the exponential Puiseux monoids that are atomic and show that the finite factorization property, the bounded factorization property, and the ACCP agree in the context of exponential Puiseux monoids. In Section~4, we provide a necessary condition and a sufficient condition for an exponential Puiseux monoid to satisfy the ACCP. We conclude by describing, in Section~5, the exponential Puiseux monoids that are semirings.

\section{Background} \label{sec:background}

In this section we introduce the concepts and notation related to our exposition. Reference material on non-unique factorization theory can be found in the monograph~\cite{GH06b} by A. Geroldinger and F. Halter-Koch. 

\subsection{Notation}

We let $\nn$ and $\nn_0$ denote the set of positive and nonnegative integers, respectively, and let $\mathbb{P}$ denote the set of prime numbers. In addition, if $X$ is a subset of the rational numbers then we set $X_{<q} \coloneqq \{x \in X \mid 0 \leq x < q\}$. In the same way we define $X_{\leq q}$, $X_{>q}$ and $X_{\geq q}$. Additionally, if $n \in \nn$ and $X \subseteq \nn_0$ then we set $X - n \coloneqq \{x - n \mid x \in X_{\geq n}\}$. For a rational number $r = n/d$ with $n$ and $d$ relatively prime positive integers, we call $n$ the \emph{numerator} and $d$ the \emph{denominator} of $r$, and we set $\mathsf{n}(r) := n$ and $\mathsf{d}(r) := d$. For $k,m$ nonnegative integers such that $k \leq m$, we denote by $\llbracket k,m \rrbracket$ the set of integers between $k$ and $m$, i.e., $\llbracket k,m \rrbracket \coloneqq \left\{ s \in \nn_0 \mid k \leq s \leq m \right\}$.

\subsection{Puiseux monoids}

A monoid is defined to be a semigroup with identity, and we tacitly assume that every monoid we refer to here is cancellative, commutative, and reduced (i.e., its only invertible element is the identity). Unless we specify otherwise, we shall use additive notation for monoids. Now let $M$ be a monoid. We denote by $\mathcal{A}(M)$ the set consisting of those elements $a \in M^{\bullet} \coloneqq M\setminus\{0\}$ such that if $a = x + y$ for some $x,y \in M$ then either $x = 0$ or $y = 0$. The elements of $\mathcal{A}(M)$ are called \emph{atoms}. For a subset $S \subseteq M$, we denote by $\langle S \rangle$ the minimal submonoid of $M$ containing $S$, and if $M = \langle S \rangle$ then we say that $S$ is a \emph{generating set} of $M$. We call the monoid $M$ \emph{atomic} provided that $M = \langle\mathcal{A}(M)\rangle$. On the other hand, it is said that $M$ is \emph{antimatter} provided that $\mathcal{A}(M) = \emptyset$. For $x,y \in M$, it is said that $x$ \emph{divides} $y$ \textit{in} $M$ if there exists $x' \in M$ such that $y = x + x'$ in which case we write $x \,|_M \,y$ and drop the subscript $M$ whenever $M = (\nn, \cdot)$. 
A subset $I$ of $M$ is an \emph{ideal} of $M$ on condition that $I + M \subseteq I$. An ideal $I$ is \emph{principal} if $I = x + M$ for some $x \in M$. Furthermore, it is said that $M$ satisfies the \emph{ascending chain condition on principal ideals} (or \emph{ACCP}) if every increasing sequence of principal ideals of $M$ eventually stabilizes. If $M$ satisfies the ACCP then it is atomic~\cite[Proposition 1.1.4]{GH06b}.

A \emph{numerical monoid} $N$ is an additive submonoid of $\nn_0$ whose complement in $\nn_0$ is finite. If $N \neq \nn_0$ then the greatest integer that is not an element of $N$ is called the \emph{Frobenius number} of $N$ and is denoted by $F(N)$. It is well known that numerical monoids are always finitely generated and, therefore, atomic. An introduction to numerical monoids can be found in~\cite{GSJCR2009}. A \emph{Puiseux monoid} is an additive submonoid of $\qq_{\ge 0}$. Clearly, Puiseux monoids are a natural generalization of numerical monoids. The atomic structure of Puiseux monoids has received considerable attention during the past few years (see, for instance,~\cite{fG16,fG17b,GG17}). In particular, some authors have studied rational cyclic semirings, i.e., Puiseux monoids generated by the elements of a finite geometric progression (see~\cite{CGG19,GG17,HP2020}). Unlike Puiseux monoids in general, rational cyclic semirings have a tractable atomic structure, and this allows to nicely compute some of their factorization invariants (for some of these computations, see~\cite{CGG19}).

\begin{definition}
	The \emph{rational cyclic semiring} over $r \in \qq_{>0}$ is the Puiseux monoid generated by the nonnegative powers of $r$, i.e., $S_r = \langle r^n \mid n\in\nn_0\rangle$. 
\end{definition}

Most rational cyclic semirings are atomic as the next theorem indicates.

\begin{theorem}\cite[Theorem 6.2]{GG17} \label{theorem: cyclic rational semirings are atomic unless generated by unit fraction}
	Let $r \in \qq_{>0}$ and $S_r = \langle r^n \mid n \in \nn_0 \rangle$. The following statements hold.
	\begin{enumerate}
		\item If $\mathsf{d}(r) = 1$ then $S_r$ is atomic with $\mathcal{A}(S_r) = \{1\}$.
		\item If $\mathsf{d}(r) > 1$ and $\mathsf{n}(r) = 1$ then $S_r$ is antimatter.
		\item If $\mathsf{d}(r) > 1$ and $\mathsf{n}(r) > 1$ then $S_r$ is atomic with $\mathcal{A}(S_r) = \{r^n \mid n \in \nn_0\}$.
	\end{enumerate} 
\end{theorem}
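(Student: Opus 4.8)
The statement to prove is Theorem~\ref{theorem: cyclic rational semirings are atomic unless generated by unit fraction}, which splits into three cases according to the denominator and numerator of $r$. The plan is to handle each case separately, relying on elementary number-theoretic properties of the $p$-adic valuation $\pval$ for primes $p$ dividing $\mathsf{d}(r)$ or $\mathsf{n}(r)$.

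For part~(1), when $\mathsf{d}(r) = 1$ the monoid $S_r$ is generated by the nonnegative powers of an integer $n = \mathsf{n}(r)$, all of which are themselves positive integers; hence $S_r \subseteq \nn_0$ and in fact $1 = r^0 \in S_r$, so $S_r = \nn_0$. Since $\nn_0$ is a numerical monoid with unique atom $1$, both the atomicity and the description of $\mathcal{A}(S_r)$ follow immediately.

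For part~(2), suppose $\mathsf{d}(r) > 1$ and $\mathsf{n}(r) = 1$, so $r = 1/d$ with $d > 1$. The key observation is that each generator $r^n$ is divisible in $S_r$ by $r^{n+1}$: indeed $r^n = (d-1)\,r^{n+1} + \cdots$ — more directly, $r^n = d\cdot r^{n+1}$, and since $d \geq 2$ and $r^{n+1} \in S_r$, the element $r^{n+1}$ is a proper summand witnessing that $r^n$ is not an atom. Because every nonzero element of $S_r$ is a nonnegative integer combination of the generators $r^n$, and each such generator fails to be an atom, one shows $\mathcal{A}(S_r) = \emptyset$; the cleanest way is to argue that any purported atom $a$ can be written using some $r^N$ with $N$ large, and then $a = d r^{N} \cdot (\text{something})$ or more simply $r^N = d r^{N+1}$ lets us rewrite and split $a$ nontrivially. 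Thus $S_r$ is antimatter.

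For part~(3), suppose $\mathsf{d}(r) = d > 1$ and $\mathsf{n}(r) = n > 1$, and fix a prime $p \mid d$. The plan is to show first that each $r^k$ is an atom and second that these are the only atoms. For the first claim, suppose $r^k = x + y$ with $x, y \in S_r^{\bullet}$; writing $x$ and $y$ as nonnegative integer combinations of powers $r^j$ and comparing $p$-adic valuations — noting $\pval(r^k) = -k\,\pval(d) < \pval(r^j)$ whenever $j < k$ cannot occur in a representation of something $\le r^k$ with the right valuation, while terms $r^j$ with $j > k$ would force the valuation of the sum to be more negative than $-k\,\pval(d)$ unless they cancel, which nonnegative integer coefficients cannot do — one concludes the representation of $r^k$ can only involve $r^k$ itself, contradicting that both $x,y$ are nonzero. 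For the second claim, any nonzero $s \in S_r$ has a representation $s = \sum c_j r^j$ with $c_j \in \nn_0$; this is literally a factorization of $s$ into the atoms $r^j$, so $S_r$ is atomic and $\mathcal{A}(S_r) = \{r^n : n \in \nn_0\}$ (here $n$ is reused as the exponent index). The main obstacle in this case is the valuation bookkeeping: one must carefully show that in any $S_r$-representation of an element, the exponents that can appear are constrained by the $p$-adic valuation, so that an element of valuation $-k\,\pval(d)$ admits no representation mixing a higher power $r^{k'}$ with $k' > k$ with lower powers in a way that both lands at the correct valuation and is nontrivial; this is where the hypothesis $n > 1$ (equivalently, $p \nmid n$ for primes $p \mid d$, so that $\pval(r^j) = -j\,\pval(d)$ is genuinely strictly decreasing in $j$) is essential, and contrasts with part~(2) where $n = 1$ makes the generators collapse under the relation $r^j = d\,r^{j+1}$.
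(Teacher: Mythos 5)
First, a point of reference: the paper does not prove this statement at all --- it is quoted verbatim from \cite[Theorem 6.2]{GG17} --- so there is no in-paper proof to compare yours against, and I can only assess your argument on its own terms. Parts (1) and (2) are fine: $\mathsf{d}(r)=1$ gives $S_r=\nn_0$; and when $\mathsf{n}(r)=1$ the identity $r^m=\mathsf{d}(r)\,r^{m+1}$ shows no generator is an atom, while every atom of a reduced monoid generated by a set must itself lie in that set, so $\mathcal{A}(S_r)=\emptyset$.

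The genuine gap is in part (3), in the claim that each $r^k$ is an atom. Your mechanism is that a term $c_jr^j$ with $j>k$ ``would force the valuation of the sum to be more negative than $-k\,\pval(\mathsf{d}(r))$ unless they cancel, which nonnegative integer coefficients cannot do.'' Both halves of this fail. A single term already defeats the first: taking $c_{k+1}=\mathsf{d}(r)$ gives $\pval\bigl(c_{k+1}r^{k+1}\bigr)=-k\,\pval(\mathsf{d}(r))$ with no inter-term cancellation, because the coefficient itself can absorb powers of $p$; indeed $\mathsf{d}(r)\,r^{k+1}=\mathsf{n}(r)\,r^k$ is an honest element of $S_r$ with exactly the valuation of $r^k$, so valuation alone cannot separate $r^k$ from sums involving higher powers. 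As for the second half, $p$-adic valuations of sums of \emph{positive} rationals do ``cancel'': $\pval\bigl(1/p+(p-1)/p\bigr)=0>-1$. What actually closes the argument is divisibility plus a size estimate: if $r^k=\sum_{j\le m}c_jr^j$ with $m>k$ and $c_m>0$, clearing denominators and reducing modulo $\mathsf{d}(r)$ forces $\mathsf{d}(r)\mid c_m$, so one rewrites $c_mr^m=(c_m/\mathsf{d}(r))\,\mathsf{n}(r)\,r^{m-1}$ and pushes the top exponent down; after finitely many steps the coefficient of $r^{k+1}$ is a positive multiple of $\mathsf{d}(r)$, whence that single term already contributes at least $\mathsf{d}(r)\,r^{k+1}=\mathsf{n}(r)\,r^k\ge 2r^k>r^k$, a contradiction. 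This is also where $\mathsf{n}(r)>1$ genuinely enters --- not, as you suggest, through $p\nmid\mathsf{n}(r)$ (which holds for every $r$ in lowest terms, including $\mathsf{n}(r)=1$), but because the relation $\mathsf{d}(r)\,r^{k+1}=\mathsf{n}(r)\,r^k$ degenerates to $r^k=\mathsf{d}(r)\,r^{k+1}$ precisely when $\mathsf{n}(r)=1$, which is exactly what destroys atomicity in part (2). As written, your part (3) does not go through without this repair.
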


\subsection{Factorizations}

Let $M$ be a commutative, cancellative, reduced, and atomic monoid. The \emph{factorization monoid} of $M$, denoted by $\mathsf{Z}(M)$, is the free commutative monoid on $\mathcal{A}(M)$. The elements of $\mathsf{Z}(M)$ are called \emph{factorizations}, and if $z = a_1 + \cdots + a_n \in \mathsf{Z}(M)$ for $a_1, \ldots, a_n \in\mathcal{A}(M)$ then we say that the length of $z$, denoted by $|z|$, is $n$. 
The unique monoid homomorphism $\pi\colon\mathsf{Z}(M) \to M$ satisfying that $\pi(a) = a$ for all $a \in\mathcal{A}(M)$ is called the \emph{factorization homomorphism} of $M$. For each $x \in M$, there are two important sets associated to $x$:
\[
\mathsf{Z}_M(x) \coloneqq \pi^{-1}(x) \subseteq \mathsf{Z}(M) \hspace{0.6 cm}\text{ and } \hspace{0.6 cm}\mathsf{L}_M(x) \coloneqq \{|z| : z \in\mathsf{Z}_M(x)\},
\]
which are called the \emph{set of factorizations} of $x$ and the \emph{set of lengths} of $x$, respectively; if the monoid $M$ is clear from the context then we drop the subscript. In addition, the collection $\mathcal{L}(M) \coloneqq \{\mathsf{L}(x) \mid x \in M\}$ is called the \emph{system of sets of lengths} of $M$. See~\cite{aG16} for a survey on sets of lengths. We say that $M$ satisfies the \emph{finite factorization property} (or \emph{FFP}) provided that $\mathsf{Z}(x)$ is nonempty and finite for all $x \in M$. In this case we also say that $M$ is an \emph{FFM}. Similarly, $M$ satisfies the \emph{bounded factorization property} (or \emph{BFP}) if $\mathsf{L}(x)$ is nonempty and finite for all $x \in M$, and in this case we say that $M$ is a \emph{BFM}. It is clear that each FFM is a BFM, and it follows from~\cite[Corollary~1.3.3]{GH06b} that each BFM satisfies the ACCP.

\section{Atomic Structure of Exponential Puiseux Monoids}

In this section we investigate the atomic structure of \emph{exponential Puiseux monoids}. Specifically, we show that most exponential Puiseux monoids are atomic, and we prove that the FFP, the BFP, and the ACCP are equivalent in this context. But first let us define the object of study of this paper.


\begin{definition} \label{def: exponential PM}
	Take $r \in \qq_{>0}$ and let $S = \{s_0 = 0 < s_1 < s_2 < \cdots\}$ be a subset of $\nn_0$. We let $M_{r,S}$ denote the Puiseux monoid
	\[
	M_{r,S} \coloneqq \langle r^{s_n} \mid n \in \nn_0 \rangle,
	\]
	which we called \emph{exponential}. 
\end{definition}


\begin{remark} \label{rem: identity relating delta_i and s_m}
		With notation as in Definition~\ref{def: exponential PM} and for each $n \in \nn_0$, we denote by $s_n$ the $(n + 1)$th smallest element of $S$ and set $\delta_n \coloneqq s_{n+1} - s_n$. Then it is easy to see that the equality $\sum_{i = 0}^{m - 1} \delta_i = s_m$ holds.
\end{remark}

\begin{remark}
	Finitely generated Puiseux monoids are isomorphic to numerical monoids by ~\cite[Proposition 3.2]{fG17a}. As it is clear that numerical monoids satisfy the finite factorization property, we assume that exponential Puiseux monoids are not finitely generated unless we specify otherwise. 
\end{remark}


In the literature we can find many instances in which particular families of exponential Puiseux monoids have been studied. Consider the following examples.

\begin{example}
	Let $N = (\nn_0, +)$. Clearly, $N$ is an exponential Puiseux monoid. Furthermore, $\nn_0 \subseteq M$ for all exponential Puiseux monoids $M$. As mentioned before, the atomic structure of $N$ is not hard to describe: it is an FFM. 
\end{example}

\begin{example}
	Let $r$ be a positive rational number and consider the Puiseux monoid generated by the set $\{r^n \mid n\in\nn_0\}$. These monoids, introduced in~\cite{GG17}, are called rational cyclic semirings because they are also closed under multiplication. The atomic structure of rational cyclic semirings is not very rich since they are almost always atomic (Theorem~\ref{theorem: cyclic rational semirings are atomic unless generated by unit fraction}) and satisfy the ACCP if and only if $r \geq 1$~(\cite[Corollary 4.4]{CGG2019}, Theorem~\ref{theorem: cyclic rational semirings are atomic unless generated by unit fraction}, and \cite[Theorem 5.6]{fG16}).
\end{example}

\begin{example}
	Let $\mathcal{B}$ be a finite subset of $\qq_{>0}$ and set $M_{\mathcal{B}} \coloneqq \langle b^n \mid b \in \mathcal{B},\, n \in \mathbb{N}_0 \rangle$. We say that $M_{\mathcal{B}}$ is the rational multicyclic monoid over $\mathcal{B}$ provided that $\mathcal{B}$ is minimal, that is, if $\mathcal{B}' \subsetneq \mathcal{B}$ then $M_{\mathcal{B}'} \subsetneq M_{\mathcal{B}}$. These monoids are a direct generalization of rational cyclic semirings, and they were studied by the third author in~\cite{HP2020}. Many rational multicyclic monoids are exponential. Indeed, it is not hard to see that $M_{\mathcal{B}}$ is an exponential Puiseux monoid if and only if the elements of $\mathcal{B}$ are powers of the same positive rational number $r$. The atomic structure of rational multicyclic monoids is somewhat similar to that one of rational cyclic semirings (see~\cite[Theorem 3.7]{HP2020}).
\end{example}

As is the case for rational cyclic semirings, it is straightforward to describe the exponential Puiseux monoids that are atomic. 

\begin{prop} \label{prop:atomicity of exponential PMs}
	Let $M_{r,S}$ be an exponential Puiseux monoid. Then the following statements hold. 
	\begin{enumerate}
        \item If $\mathsf{d}(r)=1$ then $M_{r,S} = \nn_0$ and so $\mathcal{A}(M_{r,S}) = \{1\}$.
        \item If $\mathsf{n}(r)=1$ and $\mathsf{d}(r)>1$ then $M_{r,S}$ is antimatter and so $\mathcal{A}(M_{r,S}) = \emptyset$.
		\item If $\mathsf{n}(r)>1$ and $\mathsf{d}(r)>1$ then $M_{r,S}$ is atomic and $\mathcal{A}(M_{r,S}) = \{r^s \mid s \in S\}$.
	\end{enumerate}
\end{prop}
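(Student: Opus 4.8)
The plan is to handle the three cases separately, with the first two being essentially immediate and the third requiring the real work. For case (1), when $\mathsf{d}(r) = 1$, every power $r^{s_n}$ is a positive integer, so $M_{r,S} \subseteq \nn_0$; since $r^{s_0} = 1 \in M_{r,S}$, we get $1 \in M_{r,S}$ and hence $\nn_0 \subseteq M_{r,S}$, giving equality. The atoms of $(\nn_0,+)$ are well known to be $\{1\}$. For case (2), when $\mathsf{n}(r) = 1$ and $\mathsf{d}(r) > 1$, note that $M_{r,S} \subseteq S_r$, and by Theorem~\ref{theorem: cyclic rational semirings are atomic unless generated by unit fraction}(2) the monoid $S_r$ is antimatter. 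One can argue directly: each generator $r^{s_n}$ equals $\mathsf{d}(r)^{s_{n+1} - s_n}\, r^{s_{n+1}}$, which is a sum of $\mathsf{d}(r)^{\delta_n}$ copies of $r^{s_{n+1}} \in M_{r,S}$, and since $\mathsf{d}(r)^{\delta_n} \geq 2$ this exhibits $r^{s_n}$ as a non-atom; a short induction then shows that every element of $M_{r,S}^\bullet$ is a sum of generators, each of which is a non-atom, so $\mathcal{A}(M_{r,S}) = \emptyset$.

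The substantive case is (3), where $\mathsf{n}(r) > 1$ and $\mathsf{d}(r) > 1$. Write $n = \mathsf{n}(r)$ and $d = \mathsf{d}(r)$ with $\gcd(n,d) = 1$. I would first show each $r^s$ with $s \in S$ is an atom. Suppose $r^{s_k} = x + y$ with $x, y \in M_{r,S}^\bullet$. Writing $x$ and $y$ as $\nn_0$-linear combinations of the generators and clearing denominators, one gets an equation of the form $n^{s_k} d^{\,t} = \sum c_j\, n^{s_j} d^{\,t - s_j}$ for a suitable exponent $t \geq \max s_j$ appearing and coefficients $c_j \in \nn_0$, not all zero. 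The key point is a valuation argument: applying $\mathsf{v}_n$ (the $n$-adic valuation) to both sides, together with $\gcd(n,d)=1$, forces every generator appearing in $x$ or $y$ to have index $s_j \geq s_k$; but the smallest such generator $r^{s_k}$ already has value $r^{s_k}$, so $x, y \geq r^{s_k}$, forcing $x + y \geq 2 r^{s_k} > r^{s_k}$ unless one of them is $0$, a contradiction. (Here I am using that $r < 1$ is irrelevant — what matters is that a generator strictly smaller than $r^{s_k}$, i.e. with larger exponent, cannot appear, and a generator equal to $r^{s_k}$ already saturates the sum.) Hence $r^s \in \mathcal{A}(M_{r,S})$ for every $s \in S$.

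It remains to show there are no other atoms and that $M_{r,S}$ is atomic, but both follow at once: since $M_{r,S} = \langle r^s \mid s \in S\rangle$ and every generator $r^s$ is an atom, we have $M_{r,S} = \langle \mathcal{A}(M_{r,S})\rangle$, so $M_{r,S}$ is atomic; and any atom of $M_{r,S}$ must divide — in fact equal, being an atom — some sum of generators, hence must itself be one of the $r^s$ (an atom that is a sum of atoms must equal one of them). So $\mathcal{A}(M_{r,S}) = \{r^s \mid s \in S\}$. The main obstacle is the valuation bookkeeping in the atom argument of case (3): one must choose the common denominator carefully and track which generators can carry nonzero coefficients, but conceptually it is the same mechanism that makes rational cyclic semirings atomic in Theorem~\ref{theorem: cyclic rational semirings are atomic unless generated by unit fraction}(3), just applied to the subset $S$ of exponents rather than all of $\nn_0$.
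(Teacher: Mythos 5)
Your cases (1) and (2) are correct and essentially coincide with the paper's argument: the paper also proves (2) from the identity $\mathsf{d}(r)^{-s_n} = \mathsf{d}(r)^{\delta_n}\,\mathsf{d}(r)^{-s_{n+1}}$, which exhibits every generator as a proper sum of $\mathsf{d}(r)^{\delta_n}\ge 2$ copies of another generator.

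Case (3) is where you depart from the paper, and the departure contains a genuine gap. The paper's proof is a two-line reduction: if $r^{s}=x+y$ with $x,y\in M_{r,S}^{\bullet}$, then since $M_{r,S}^{\bullet}\subseteq S_r^{\bullet}$ this is also a nontrivial decomposition in the rational cyclic semiring $S_r$, so $r^{s}\notin\mathcal{A}(S_r)$, contradicting Theorem~\ref{theorem: cyclic rational semirings are atomic unless generated by unit fraction}(3); atomicity and $\mathcal{A}(M_{r,S})=\{r^s\mid s\in S\}$ then follow exactly as in your closing paragraph. You instead redo the valuation argument from scratch, and the pivotal sentence does not hold up. You assert that applying the $\mathsf{n}(r)$-adic valuation ``forces every generator appearing in $x$ or $y$ to have index $s_j\ge s_k$'' and conclude $x,y\ge r^{s_k}$. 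When $r<1$ (the case the rest of the paper actually needs), generators with index $s_j\ge s_k$ satisfy $r^{s_j}\le r^{s_k}$, so the inequality $x,y\ge r^{s_k}$ points the wrong way; in that regime the size comparison trivially excludes $s_j<s_k$, and what the valuation must do is exclude $s_j>s_k$ (the cleared equation would be divisible by $\mathsf{n}(r)^{s_k+1}$ on one side but not the other). Conversely, when $r>1$ size excludes $s_j>s_k$, and a single application of the valuation at the minimal appearing index $m<s_k$ yields only $\mathsf{n}(r)\mid c_m$, not $c_m=0$; finishing requires iterating the transformation $\mathsf{n}(r)\,r^{t}=\mathsf{d}(r)\,r^{t+1}$ and tracking the number of summands, which is precisely the content of the cited theorem for $S_r$. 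Your parenthetical remark gestures at the correct statements but asserts them without proof, so the central step of (3) is not established as written. The cleanest repair is the paper's observation that a non-atom of the submonoid $M_{r,S}$ is a non-atom of $S_r$, together with the already-quoted description of $\mathcal{A}(S_r)$.
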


\begin{proof}
	It is easy to see that if $\mathsf{d}(r) = 1$ then $M_{r,S} = \langle 1 \rangle = \nn_0$. On the other hand, the equation $\mathsf{d}(r)^{-s_n} = \mathsf{d}(r)^{\delta_n}\mathsf{d}(r)^{-s_{n + 1}}$ holds for all $n \in \nn_0$ from which $(2)$ follows since $\mathsf{n}(r) = 1$. As for~$(3)$, if $r^s \not\in\mathcal{A}(M_{r,S})$ for some $s \in S$ then $r^s \not\in\mathcal{A}(S_r)$, where $S_r= \langle r^n \mid n\in\nn_0\rangle$, contradicting Theorem~\ref{theorem: cyclic rational semirings are atomic unless generated by unit fraction}.
\end{proof}

As we indicated above, a rational cyclic semiring $S_r$ with $r < 1$ does not satisfy the ACCP by~\cite[Corollary 4.4]{CGG2019} and Theorem~\ref{theorem: cyclic rational semirings are atomic unless generated by unit fraction}. On the other hand, if $r \geq 1$ then $S_r$ is an FFM by virtue of~\cite[Theorem 5.6]{fG16}. Consequently, the FFP, the BFP, and the ACCP coincide in the context of rational cyclic semirings. As we show next, this result can be extended to exponential Puiseux monoids. First we collect two technical lemmas.

\begin{lemma} \label{Lemma 2.2}
	Let $x$ be a nonzero element of an atomic exponential Puiseux monoid $M_{r,S}$ with $r \in\qq_{<1}$ and consider a factorization $z=\sum_{i=0} ^n c_i r^{s_i} \in\mathsf{Z}(x)$ with coefficients $c_0, \ldots, c_n \in \nn_0$. The following conditions hold.
	\begin{enumerate}
		\item $\min \mathsf{L}(x) =  |z|$ if and only if $c_i < \mathsf{d}(r)^{\delta_{i-1}}$ for each $i \in \llbracket 1,n \rrbracket$. 
		\item There exists exactly one factorization $z_0$ in $\mathsf{Z}(x)$ of minimum length.
		\item $\max \mathsf{L}(x) = |z|$ if and only if $c_i < \mathsf{n}(r)^{\delta_i}$ for all $i\in\llbracket 0,n \rrbracket$.
		\item There exists, at most, one factorization of maximum length of $x$.
		\item If $c_i < \mathsf{n}(r)$ for each $i \in \llbracket 0,n \rrbracket$ then $| \mathsf{Z}(x)| =1$.
	\end{enumerate}
\end{lemma}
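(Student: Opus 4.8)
The plan rests on the \emph{trade identity} $\mathsf{n}(r)^{\delta_i}\, r^{s_i} = \mathsf{d}(r)^{\delta_i}\, r^{s_{i+1}}$, valid in $M_{r,S}$ for every $i \in \nn_0$ because $r^{\delta_i} = \mathsf{n}(r)^{\delta_i}/\mathsf{d}(r)^{\delta_i}$ and $r^{s_{i+1}} = r^{s_i} r^{\delta_i}$. Since $r \in \qq_{<1}$ we have $\mathsf{n}(r) < \mathsf{d}(r)$, and since $M_{r,S}$ is atomic, Proposition~\ref{prop:atomicity of exponential PMs} rules out $\mathsf{n}(r) = 1$, so in fact $\mathsf{n}(r) \geq 2$; also $\delta_i \geq 1$ for every $i$. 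Given a factorization $z = \sum_{i=0}^n c_i r^{s_i}$, a \emph{backward trade} (available whenever some $c_{i+1} \geq \mathsf{d}(r)^{\delta_i}$) replaces $\mathsf{d}(r)^{\delta_i}$ copies of $r^{s_{i+1}}$ by $\mathsf{n}(r)^{\delta_i}$ copies of $r^{s_i}$ and strictly \emph{decreases} $|z|$, while a \emph{forward trade} (available whenever some $c_i \geq \mathsf{n}(r)^{\delta_i}$) does the reverse and strictly \emph{increases} $|z|$. Call $z$ \emph{reduced} if no backward trade applies (equivalently, $c_i < \mathsf{d}(r)^{\delta_{i-1}}$ for all $i \in \llbracket 1,n \rrbracket$) and \emph{expanded} if no forward trade applies (equivalently, $c_i < \mathsf{n}(r)^{\delta_i}$ for all $i \in \llbracket 0,n \rrbracket$). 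In particular, a factorization of minimum length must be reduced and a factorization of maximum length must be expanded, which already yields the forward implications of~(1) and~(3).

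The core technical ingredient is a mixed-radix descent that supplies the reverse implications. If $z = \sum_{i=0}^m c_i r^{s_i}$ and $z' = \sum_{i=0}^m c'_i r^{s_i}$ are factorizations of the same element of $M_{r,S}$ (after padding with zero coefficients to a common top index $m$), then clearing denominators rewrites $z = z'$ as
\[
\sum_{i=0}^m e_i\, \mathsf{n}(r)^{s_i}\, \mathsf{d}(r)^{s_m - s_i} = 0, \qquad e_i \coloneqq c_i - c'_i .
\]
Reading this congruence modulo $\mathsf{d}(r)^{\delta_{m-1}}$ and using $\gcd(\mathsf{n}(r),\mathsf{d}(r)) = 1$ forces $\mathsf{d}(r)^{\delta_{m-1}} \mid e_m$; writing $e_m = \mathsf{d}(r)^{\delta_{m-1}} g_m$, dividing the equation by $\mathsf{d}(r)^{\delta_{m-1}}$, and absorbing $g_m$ into the coefficient at index $m-1$ returns an equation of the same shape with top index $m-1$. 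Iterating produces integers $g_m, \dots, g_1$ with $\sum_{i=0}^m e_i = \sum_{i=1}^m g_i \big(\mathsf{d}(r)^{\delta_{i-1}} - \mathsf{n}(r)^{\delta_{i-1}}\big)$. When $z$ is reduced we have $c_i < \mathsf{d}(r)^{\delta_{i-1}}$ and $c'_i \geq 0$, hence $e_i < \mathsf{d}(r)^{\delta_{i-1}}$ for $i \geq 1$, which at each stage of the descent forces $g_i \leq 0$; since $\mathsf{d}(r)^{\delta_{i-1}} - \mathsf{n}(r)^{\delta_{i-1}} > 0$, we obtain $\sum_{i=0}^m e_i \leq 0$, i.e.\ $|z| \leq |z'|$. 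Thus a reduced factorization of $x$ has minimum length, completing~(1). Moreover $\mathsf{L}(x)$ is a nonempty subset of $\nn$, so a shortest factorization of $x$ exists; if $z$ and $z'$ are two of them then both are reduced, so $|e_i| < \mathsf{d}(r)^{\delta_{i-1}}$ for $i \geq 1$, and now the successive divisibilities force $e_m = \dots = e_1 = 0$ and then $e_0 = 0$ from the surviving term $e_0\,\mathsf{d}(r)^{s_m}$, so $z = z'$; this is~(2).

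Parts~(3) and~(4) are the mirror image: run the descent from the bottom index upward, working modulo successive powers of $\mathsf{n}(r)$. When $z$ is expanded the bounds $e_i < \mathsf{n}(r)^{\delta_i}$ (valid for all $i$, since $c_i < \mathsf{n}(r)^{\delta_i}$ and $c'_i \geq 0$) produce integers $f_0, \dots, f_{m-1} \leq 0$ with $\sum_{i=0}^m e_i = \sum_{i=0}^{m-1} f_i \big(\mathsf{n}(r)^{\delta_i} - \mathsf{d}(r)^{\delta_i}\big) \geq 0$, so $|z| \geq |z'|$ and an expanded factorization has maximum length, completing~(3); when both $z$ and $z'$ are expanded the two-sided bounds collapse the descent to $e_i = 0$ for all $i$, giving~(4). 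Finally,~(5) follows from~(1),~(2), and~(3): if $c_i < \mathsf{n}(r)$ for all $i \in \llbracket 0,n \rrbracket$ then $c_i < \mathsf{n}(r) \leq \mathsf{n}(r)^{\delta_i}$ and $c_i < \mathsf{n}(r) < \mathsf{d}(r) \leq \mathsf{d}(r)^{\delta_{i-1}}$, so $z$ is simultaneously expanded and reduced; hence $\max \mathsf{L}(x) = |z| = \min \mathsf{L}(x)$, so every factorization of $x$ has length $|z|$ and is therefore a factorization of minimum length, and by~(2) there is exactly one such, whence $|\mathsf{Z}(x)| = 1$. The main obstacle is the bookkeeping inside the descent: one must track the signs of the carries $g_i$ (respectively $f_i$) so the final inequality lands on the correct side, and verify that two-sided coefficient bounds degenerate the descent to the trivial solution. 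A secondary subtlety is that, unlike backward reduction, iterating forward trades need not terminate (this happens exactly when $\max \mathsf{L}(x) = \infty$), which is why the reverse implication of~(3) must be extracted from the descent identity directly rather than from a normal-form algorithm.
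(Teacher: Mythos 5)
Your proof is correct, and it runs on the same engine as the paper's --- the trade identity $\mathsf{n}(r)^{\delta_i}r^{s_i} = \mathsf{d}(r)^{\delta_i}r^{s_{i+1}}$ plus the divisibility extracted after clearing denominators from $\gcd(\mathsf{n}(r),\mathsf{d}(r))=1$ --- but it organizes the reverse implications genuinely differently. The paper leaves (1) and (2) to the reader (pointing to the proof of \cite[Lemma~3.1]{CGG19}), gets the forward implication of (3) from a single forward trade, and for the converse first normalizes a putatively longer factorization $z'$ by finitely many forward trades so that its coefficients up to its original top index are below $\mathsf{n}(r)^{\delta_i}$, then compares with $z$ at the first index $l$ where the coefficients differ and derives the impossible divisibility $\mathsf{n}(r)^{\delta_l}\mid c_l-e_l$ with both coefficients in $\llbracket 0,\mathsf{n}(r)^{\delta_l}-1\rrbracket$. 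Your mixed-radix descent instead produces the closed-form identity $\sum_i e_i=\sum_i g_i\bigl(\mathsf{d}(r)^{\delta_{i-1}}-\mathsf{n}(r)^{\delta_{i-1}}\bigr)$ and its mirror, and reads the length inequality, or the vanishing of all $e_i$, off the signs of the carries. What this buys is a self-contained and symmetric treatment of all of (1)--(4) (no external lemma, no preliminary normalization of $z'$), at the cost of heavier bookkeeping; you also correctly isolate the subtlety that one cannot simply iterate forward trades to termination, since that iteration fails to terminate exactly when no maximum-length factorization exists. The only delicate point in your argument is that the sign claim for the carries is an induction --- one needs $g_{i+1}\le 0$ to keep the updated coefficient $e_i+g_{i+1}\mathsf{n}(r)^{\delta_i}$ below $\mathsf{d}(r)^{\delta_{i-1}}$ before concluding $g_i\le 0$ --- and your bookkeeping handles this correctly.
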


\begin{proof}
	The proofs of $(1)$ and $(2)$ are left to the reader as they follow the proof of~\cite[Lemma~3.1]{CGG19}. To prove the direct implication of $(3)$ note that if $c_i \geq \mathsf{n}(r)^{\delta_i}$ for some $i \in\llbracket 0,n \rrbracket$ then by using the transformation $\mathsf{n}(r)^{\delta_i}r^{s_i} = \mathsf{d}(r)^{\delta_i}r^{s_{i + 1}}$ we can generate a factorization $z^*\in\mathsf{Z}(x)$ such that $|z^*| > |z|$ since $\mathsf{d}(r) > \mathsf{n}(r)$. Conversely, consider a factorization $z' = \sum_{i=0}^t d_i r^{s_i} \in\mathsf{Z}(x)$ with coefficients $d_0, \ldots, d_t \in \nn_0$ and suppose, by way of contradiction, that $|z'| > |z|$. There is no loss in assuming that $n \leq t$. By applying the identity $\mathsf{n}(r)^{\delta_m}r^{s_m} = \mathsf{d}(r)^{\delta_m}r^{s_{m + 1}}$ with $m \in\nn_0$ finitely many times, we can generate factorizations $z' = z_1, \ldots, z_k = \sum_{i = 0}^{q} e_ir^{s_i} \in\mathsf{Z}(x)$ such that $|z_j| < |z_{j + 1}|$ for $j \in \llbracket 1,k - 1 \rrbracket$ and $e_i < \mathsf{n}(r)^{\delta_i}$ for $i \in\llbracket 0,t \rrbracket$. Note that $t \leq q$. Let $l \in\llbracket 0,n \rrbracket$ be the smallest index such that $c_l \neq e_l$. Note that such an index $l$ exists given that the inequalities $|z| < |z'| \leq |z_k|$ hold. This implies that $z$ and $z_k$ are two different factorizations of $x$. Thus,
	\[
	(c_l - e_l)r^{s_l} = \sum_{i = l + 1}^{n} (e_i - c_i)r^{s_i} + \sum_{i = n + 1}^{q} e_i r^{s_i}\!\!,
	\]
	which implies that $\mathsf{n}(r)^{\delta_l}\, | \,c_l - e_l$. This contradiction proves that our hypothesis is untenable. Consequently, $z$ is a factorization of maximum length of $x$. Note that~$(4)$ follows readily from~$(3)$ and $(5)$ is a direct consequence of~$(1)$ and~$(3)$. 
\end{proof}

\begin{lemma} \label{lem: infinite factorization generates nonACCP sequence}
	Let $M_{r,S}$ be as in Lemma~\ref{Lemma 2.2} and let $x = \pi(k_0\,\mathsf{d}(r)^{\delta_i}r^{s_{i + 1}})$ be an element of $M_{r,S}^{\bullet}$ for some $k_0, i \in\nn_0$. If $x$ does not have a factorization of maximum length then $x = y + x'$ with $y\in M_{r,S}^{\bullet}$ and $x' = \pi(k\,\mathsf{d}(r)^{\delta_j}r^{s_{j + 1}})$ for some $k, j \in\nn$. Furthermore, $x'$ does not have a factorization of maximum length.
\end{lemma}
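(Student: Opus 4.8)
The plan is to single out the \emph{highest} atom of which $x$ is an integer multiple, split off a small remainder at that atom, and push the remaining bulk one step up; the delicate point will be choosing the split so that the bulk still has no longest factorization. Before starting I would record two facts. Since $M_{r,S}$ is atomic and $r\in\qq_{<1}$, Proposition~\ref{prop:atomicity of exponential PMs} places us in case~(3), so $\mathsf{n}(r)>1$, $\mathsf{d}(r)>1$, and $\mathcal{A}(M_{r,S})=\{r^s\mid s\in S\}$; moreover $S$ is infinite, hence $s_\ell\ge \ell$ and $s_\ell\to\infty$. Second, for $w\in M_{r,S}^{\bullet}$ we have $w/r^{s_\ell}\in\nn$ precisely when $w$ admits the one‑atom factorization consisting of $w/r^{s_\ell}$ copies of $r^{s_\ell}$, and the set of such indices $\ell$ is bounded above: choosing a prime $p\mid\mathsf{n}(r)$ gives $\pval(r)\ge 1$ and $\pval(w)\ge 0$, whence $\ell\le s_\ell\le\pval(w)$ for every such $\ell$.

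With this in place I would carry out the construction. Set $t\coloneqq\max\{\ell\in\nn_0\mid x/r^{s_\ell}\in\nn\}$, which is well defined because $x/r^{s_{i+1}}=k_0\mathsf{d}(r)^{\delta_i}\in\nn$, and write $x=c\,r^{s_t}$ with $c\coloneqq x/r^{s_t}\in\nn$. Since $x$ has no factorization of maximum length, the one‑atom factorization $c\,r^{s_t}$ is not of maximum length, so Lemma~\ref{Lemma 2.2}(3) forces its only nonzero coefficient to be the ``large'' one, i.e.\ $c\ge\mathsf{n}(r)^{\delta_t}$. Write $c=k\,\mathsf{n}(r)^{\delta_t}+\rho$ with $k\ge 1$ and $0\le\rho<\mathsf{n}(r)^{\delta_t}$, and recall the identity $\mathsf{n}(r)^{\delta_\ell}r^{s_\ell}=\mathsf{d}(r)^{\delta_\ell}r^{s_{\ell+1}}$ (Remark~\ref{rem: identity relating delta_i and s_m}). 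If $\rho=0$ then $x=k\,\mathsf{d}(r)^{\delta_t}r^{s_{t+1}}$, so $x/r^{s_{t+1}}\in\nn$, contradicting the maximality of $t$; hence $\rho\ge 1$. Applying the identity once more,
\[
x=\rho\,r^{s_t}+k\,\mathsf{d}(r)^{\delta_t}r^{s_{t+1}},
\]
so I would take $y\coloneqq\rho\,r^{s_t}\in M_{r,S}^{\bullet}$, $x'\coloneqq k\,\mathsf{d}(r)^{\delta_t}r^{s_{t+1}}$, and $j\coloneqq t$. Since $t\ge i+1\ge 1$ and $k\ge 1$, we get $j,k\in\nn$, and $x=y+x'$ with $x'=\pi(k\,\mathsf{d}(r)^{\delta_j}r^{s_{j+1}})$, as required.

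It remains to prove the final assertion, that $x'$ has no factorization of maximum length, and here I would argue by contradiction. Suppose $x'$ has one. Starting from the one‑atom factorization $k\,\mathsf{d}(r)^{\delta_t}r^{s_{t+1}}$ of $x'$ and repeatedly applying the substitution $\mathsf{n}(r)^{\delta_\ell}r^{s_\ell}\mapsto\mathsf{d}(r)^{\delta_\ell}r^{s_{\ell+1}}$ at any index $\ell$ whose current coefficient is at least $\mathsf{n}(r)^{\delta_\ell}$ — each step strictly increasing the length because $\mathsf{d}(r)>\mathsf{n}(r)$ — the process must halt after finitely many steps, since factorization lengths of $x'$ are bounded by the assumed maximum; it halts at a factorization all of whose coefficients are below the corresponding $\mathsf{n}(r)^{\delta_\ell}$, which by Lemma~\ref{Lemma 2.2}(3)--(4) is \emph{the} maximum‑length factorization $z^*$ of $x'$. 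Because each substitution only transfers mass from an index to the next and we began with mass only at index $t+1$, the support of $z^*$ lies in $\{t+1,t+2,\dots\}$; in particular the coefficient of $z^*$ at every index $\le t$ vanishes. Adjoining $\rho$ copies of the atom $r^{s_t}$ to $z^*$ then produces a factorization $\hat z$ of $x=y+x'$ whose coefficient at index $t$ is $\rho<\mathsf{n}(r)^{\delta_t}$, whose coefficients at indices $>t$ agree with those of $z^*$ (hence are below their bounds), and whose coefficients at indices $<t$ vanish; so every coefficient of $\hat z$ is below the relevant bound, and Lemma~\ref{Lemma 2.2}(3) makes $\hat z$ a maximum‑length factorization of $x$, contradicting the hypothesis on $x$. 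Hence $x'$ has no factorization of maximum length.

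I expect the genuine obstacle to be this last step, and specifically the choice of $t$: splitting off a remainder at an arbitrary index could leave $x'$ with a maximum‑length factorization whose support dips below that index, and re‑attaching $y$ might then trigger an unbounded cascade of substitutions, destroying the contradiction. Taking $t$ maximal with $x/r^{s_t}\in\nn$ is precisely what confines the maximum‑length factorization of $x'$ to indices above $t$ and keeps $\hat z$ inside the ``all coefficients small'' regime of Lemma~\ref{Lemma 2.2}(3).
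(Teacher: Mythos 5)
Your proof is correct and follows essentially the same route as the paper's: your index $t=\max\{\ell : x/r^{s_\ell}\in\nn\}$ is exactly the index the paper reaches by iteratively pushing the one-atom factorization upward while the coefficient stays divisible by $\mathsf{n}(r)^{\delta_\ell}$, and the concluding contradiction (transport the would-be maximum-length factorization of $x'$ to one of $x$ via Lemma~\ref{Lemma 2.2}(3)--(4), noting the support stays above $t$) is identical. The only difference is cosmetic: your $p$-adic bound making $t$ well defined replaces the paper's descent on the integers $k_j$.
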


\begin{proof}
	
	We start by describing a process that will generate a sequence of factorizations of $x$ each having the form $z_m = k_m \mathsf{d}(r)^h r^{s_{m + i + 1}}$ for some positive integers $k_m$ and $h$. Let $z_0 = k_0 \mathsf{d}(r)^{\delta_i}r^{s_{i + 1}}$. Assume that $z_j = k_j \mathsf{d}(r)^{h_j}r^{s_{i + j + 1}}$ was already defined for some $j \in \nn_0$. Now if $\mathsf{n}(r)^{\delta_{i + j + 1}} \nmid k_j$ then the process stops, and we obtain a sequence of factorizations $z_0, \ldots, z_j \in \mathsf{Z}(x)$. On the other hand, if $\mathsf{n}(r)^{\delta_{i + j + 1}} \mid k_j$ then by applying the transformation $\mathsf{n}(r)^{\delta_{i + j + 1}} r^{s_{i + j + 1}} = \mathsf{d}(r)^{\delta_{i + j + 1}} r^{s_{i + j + 2}}$ we generate a new factorization $z_{j + 1} = k_{j + 1} \mathsf{d}(r)^{h_{j + 1}} r^{s_{i + j + 2}} \in \mathsf{Z}(x)$ for some positive integers $k_{j + 1}$ and $h_{j + 1}$ such that $k_j > k_{j + 1}$. Since there is no infinite strictly decreasing sequence of positive integers, this process eventually stops, and it yields a sequence of factorizations $z_0, \ldots, z_n = k_n \mathsf{d}(r)^h r^{s_{n + i + 1}} \in \mathsf{Z}(x)$, where $\mathsf{n}(r)^{\delta_{n + i + 1}} \nmid k_n$. 
	
	Next note that since $x$ has no factorization of maximum length, the inequality $\mathsf{n}(r)^{\delta_{n + i + 1}} \leq k_n \mathsf{d}(r)^h$ holds by Lemma~\ref{Lemma 2.2} (part (3)). We have $k_n\mathsf{d}(r)^h = k' \mathsf{n}(r)^{\delta_{n + i + 1}} + l$ with $k' \in\nn$ and $l\in\llbracket 0,\mathsf{n}(r)^{\delta_{n + i + 1}} - 1 \rrbracket$. Since $\mathsf{n}(r)$ and $\mathsf{d}(r)$ are relatively prime numbers for each $r \in \qq_{>0}$, we have $l \neq 0$. Thus,
	\[
	k' \mathsf{d}(r)^{\delta_{n 
	+ i + 1}} \cdot r^{s_{n + i + 2}} + l\cdot r^{s_{n + i + 1}} \in\mathsf{Z}(x).
	\]
	Note that $z = l\, r^{s_{n + i + 1}}$ is a factorization of maximum length of $y = \pi(z)$ by Lemma~\ref{Lemma 2.2}. We conclude by proving that $x' = \pi(k' \mathsf{d}(r)^{\delta_{n + i + 1}} \cdot r^{s_{n + i + 2}})$ has no factorization of maximum length. Suppose, by way of contradiction, that $x'$ has a factorization of maximum length $z^* \in \mathsf{Z}(x')$. There exists a sequence of factorizations $z_0, \ldots, z_t = z^*$, where $z_0 = k' \mathsf{d}(r)^{\delta_{n + i + 1}} r^{s_{n + i + 2}}$. In fact, if $z_j$ is already defined for some $j \in \nn_0$ then either $z_j$ has a summand of the form $\mathsf{n}(r)^{\delta_k}r^{s_k}$ (with $k$ a nonnegative integer) in which case we generate a factorization $z_{j + 1}$ using the transformation $\mathsf{n}(r)^{\delta_k}r^{s_k} = \mathsf{d}(r)^{\delta_k}r^{s_{k + 1}}$ or $z_j$ does not have such a summand in which case $z_j = z^*$ by Lemma~\ref{Lemma 2.2} (parts (3) and (4)). The aforementioned replacements not only yield each time a factorization of bigger length, which means that we cannot carry out these transformations infinitely many times, but also increase the exponents of the summands involved. Consequently, $z^* + z$ is a factorization of maximum length of $x$ by Lemma~\ref{Lemma 2.2}. This contradiction proves that $x'$ has no factorization of maximum length.
%
\end{proof}

Now we are in a position to prove the main result of this section.

\begin{theorem} \label{prop: ACCP implies FF}
	Let $M_{r,S}$ be an atomic exponential Puiseux monoid. Then the following statements are equivalent.
	\begin{enumerate}
		\item $M_{r,S}$ satisfies the FFP.
		\item $M_{r,S}$ satisfies the BFP.
		\item $M_{r,S}$ satisfies the ACCP.
	\end{enumerate}
\end{theorem}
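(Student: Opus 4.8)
The plan is to establish the two ``easy'' implications $(1) \Rightarrow (2) \Rightarrow (3)$ by quoting the general facts recorded in Section~\ref{sec:background} (every FFM is a BFM, and every BFM satisfies the ACCP by~\cite[Corollary~1.3.3]{GH06b}), so that all the content lies in $(3) \Rightarrow (1)$, which I would prove in contrapositive form: if $M_{r,S}$ is not an FFM, then it fails the ACCP.

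First I would dispose of the case $r \geq 1$. Since $M_{r,S}$ is atomic, Proposition~\ref{prop:atomicity of exponential PMs} forces either $\mathsf{d}(r) = 1$ (whence $M_{r,S} = \nn_0$, an FFM) or $\mathsf{n}(r), \mathsf{d}(r) > 1$; in the latter subcase $\mathcal{A}(M_{r,S}) = \{r^s \mid s \in S\} \subseteq \{r^n \mid n \in \nn_0\} = \mathcal{A}(S_r)$, so each factorization of an element $x \in M_{r,S}$ in $M_{r,S}$ is also a factorization of $x$ in $S_r$, and since $S_r$ is an FFM for $r \geq 1$ (Theorem~\ref{theorem: cyclic rational semirings are atomic unless generated by unit fraction} together with~\cite[Theorem~5.6]{fG16}), the set $\mathsf{Z}_{M_{r,S}}(x)$ is finite; thus $M_{r,S}$ is an FFM. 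Hence we may assume $r < 1$, and then atomicity forces $\mathsf{n}(r), \mathsf{d}(r) > 1$, placing us in the setting of Lemmas~\ref{Lemma 2.2} and~\ref{lem: infinite factorization generates nonACCP sequence}.

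Now suppose $M_{r,S}$ is not an FFM and fix $x \in M_{r,S}^{\bullet}$ with $|\mathsf{Z}(x)| = \infty$. The first step is to note that $x$ has no factorization of maximum length: if $z_{\max} = \sum_{i=0}^{n} c_i r^{s_i}$ were one, then by Lemma~\ref{Lemma 2.2} any factorization of $x$ can be carried to $z_{\max}$ by finitely many length-increasing replacements $\mathsf{n}(r)^{\delta_m} r^{s_m} \to \mathsf{d}(r)^{\delta_m} r^{s_{m+1}}$ (lengths are bounded by $|z_{\max}|$, and a factorization admitting no such replacement has all coefficients below the relevant powers of $\mathsf{n}(r)$, hence equals $z_{\max}$ by parts~(3)--(4)); since these replacements never lower the largest exponent, every factorization of $x$ is supported on $\{s_0, \dots, s_n\}$, and clearing denominators shows each of its coefficients is at most $x\,\mathsf{d}(r)^{s_n}$, so $\mathsf{Z}(x)$ would be finite. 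The second step passes from $x$ to an element of the form required by Lemma~\ref{lem: infinite factorization generates nonACCP sequence}: starting from any $z \in \mathsf{Z}(x)$, apply length-increasing replacements at positions $0, 1, \dots$ in order (``carrying'' upward) until every coefficient below the top exponent $s_n$ is smaller than the corresponding power of $\mathsf{n}(r)$; because $x$ has no maximal factorization, the coefficient at $s_n$ is then at least $\mathsf{n}(r)^{\delta_n}$, so after finitely many further replacements at position $n$ we reach a factorization $\sum_{i=0}^{n} c_i' r^{s_i} + k\,\mathsf{d}(r)^{\delta_n} r^{s_{n+1}}$ with $k \in \nn$ and $c_i' < \mathsf{n}(r)^{\delta_i}$ for all $i \in \llbracket 0,n \rrbracket$. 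Setting $y := \pi\bigl(\sum_{i=0}^{n} c_i' r^{s_i}\bigr)$ and $x' := \pi\bigl(k\,\mathsf{d}(r)^{\delta_n} r^{s_{n+1}}\bigr)$ we get $x = y + x'$, and $x'$ again has no factorization of maximum length: any maximal factorization $z^*$ of $x'$ is reached from the single term $k\,\mathsf{d}(r)^{\delta_n} r^{s_{n+1}}$ by length-increasing replacements, hence is supported on exponents $\geq s_{n+1}$ and therefore disjoint from the support $\{s_0, \dots, s_n\}$ of the chosen factorization of $y$, so adding the two would yield a factorization of $x$ all of whose coefficients are below the relevant powers of $\mathsf{n}(r)$, i.e.\ a maximal factorization of $x$ --- a contradiction.

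Finally, with an element $x' = \pi(k_0\,\mathsf{d}(r)^{\delta_i} r^{s_{i+1}}) \in M_{r,S}^{\bullet}$ having no factorization of maximum length in hand, I would iterate Lemma~\ref{lem: infinite factorization generates nonACCP sequence}: it produces $x' = y_1 + x_1$ with $y_1 \in M_{r,S}^{\bullet}$ and $x_1$ again of that form and without a maximal factorization, and repeating yields a sequence $(x_n)_{n \geq 1}$ with $x_{n+1} \mid_{M_{r,S}} x_n$ for every $n$; since $M_{r,S}$ is reduced and cancellative and each quotient is nonzero, $x_n \nmid_{M_{r,S}} x_{n+1}$, so $(x_n + M_{r,S})_{n \geq 1}$ is a strictly increasing chain of principal ideals and $M_{r,S}$ does not satisfy the ACCP. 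I expect the main obstacle to be the second step: converting an arbitrary element with infinitely many factorizations into one of the rigid shape demanded by Lemma~\ref{lem: infinite factorization generates nonACCP sequence} while keeping the supports of the two summands disjoint, which is precisely what makes the ``add the maximal factorizations'' argument valid.
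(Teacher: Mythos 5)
Your proposal is correct and follows essentially the same route as the paper's proof: the easy implications are quoted from the background, the case $r \geq 1$ is dispatched via \cite[Theorem 5.6]{fG16}, and for $r < 1$ one shows that an element with infinitely many factorizations has no factorization of maximum length, extracts from it an element of the shape required by Lemma~\ref{lem: infinite factorization generates nonACCP sequence}, and iterates that lemma to defeat the ACCP. The only local difference is in the first step: where the paper derives a contradiction by producing an unbounded strictly increasing sequence of lengths below $|z|$, you observe instead that carrying every factorization up to the unique maximal one forces all factorizations to be supported on $\{s_0,\dots,s_n\}$ with bounded coefficients, so $\mathsf{Z}(x)$ would be finite --- an equally valid, arguably cleaner, variant of the same idea.
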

\begin{proof}
	Recall that, for commutative and cancellative monoids, $(1)$ implies $(2)$ by definition and $(2)$ implies $(3)$ by \cite[Corollary~1.3.3]{GH06b}. Then our proof reduces to showing that $(3)$ implies $(1)$. If $r \geq 1$ then our result follows from~\cite[Theorem 5.6]{fG16}; consequently, we may assume $r < 1$. Assume that $M_{r,S}$ is not an FFM, and let $x \in M_{r,S}^{\bullet}$ with $|\mathsf{Z}(x)| = \infty$. 
	
	By way of contradiction, suppose that $x$ has a factorization of maximum length $z=\sum_{i=0}^m c_i r^{s_i} \in \mathsf{Z}(x)$ with coefficients $c_0, \ldots, c_m \in \nn_0$. Note that the set $S' = \{s \in S : r^{s}\,|_{M_{r,S}}\, x\}$ has infinite cardinality since $|\mathsf{Z}(x)| = \infty$. Clearly, there exists $s_j \in S'$ such that $s_j > s_m$. This implies that there exists $z_1 =\sum_{i=0}^l d_i r^{s_i} \in\mathsf{Z}(x)$, where $l, d_i \in\nn_0$, $j < l$, $d_j > 0$, and $s_i \in S$ for each $i \in\llbracket 0,l \rrbracket$. Since $d_j > 0$, we have $z_1 \neq z$. By virtue of Lemma~\ref{Lemma 2.2}, the inequality $d_k \geq \mathsf{n}(r)^{\delta_k}$ holds for some $k \in\llbracket 0,l \rrbracket$. Consequently, one can obtain a factorization $z_2$ from $z_1$ by applying the transformation $\mathsf{n}(r)^{\delta_k}r^{s_k} = \mathsf{d}(r)^{\delta_k}r^{s_{k + 1}}$\!. Note that $|z_2| > |z_1|$. Moreover, $z_2 \neq z$ as either $s_j$ or $s_{j + 1}$ shows up in the factorization $z_2$. Repeating this process for $z_2$ we can obtain a factorization $z_3 \in \mathsf{Z}(x)$ such that $|z_3| > |z_2| > |z_1|$ and $z_3 \neq z$. It follows by induction that there exists a sequence $z_1, z_2, \ldots$ of elements of $\mathsf{Z}(x)$ such that $|z_n| < |z_{n + 1}| < |z|$ for each $n \in\nn$, a contradiction. Hence $x$ has no factorization of maximum length.
	
	Now let $z=\sum_{i=0}^m c_i r^{s_i} \in\mathsf{Z}(x)$ with coefficients $c_0, \ldots, c_m \in \nn_0$. By applying the transformation $\mathsf{n}(r)^{\delta_i}r^{s_i} = \mathsf{d}(r)^{\delta_i}r^{s_{i + 1}}$ finitely many times over all summands $c_ir^{s_i}$ with $i \in\llbracket 0,m - 1 \rrbracket$ we can generate a factorization $z' = \sum_{i=0}^m d_i r^{s_i} \in\mathsf{Z}(x)$ with $d_i \in\nn_0$ for each $i \in\llbracket 0,m \rrbracket$ such that $d_i < \mathsf{n}(r)^{\delta_i}$ for each $i \in\llbracket 0, m - 1 \rrbracket$. Note that $d_m = h\, \mathsf{n}(r)^{\delta_m} + l$, where $h \in \nn$ and $l\in\llbracket 0,\mathsf{n}(r)^{\delta_m}  - 1 \rrbracket$; otherwise, $z'$ would be the factorization of maximum length of $x$ by Lemma~\ref{Lemma 2.2}, which is impossible. Then $x' = \pi(h \mathsf{n}(r)^{\delta_m}r^{s_m}) = \pi(h \mathsf{d}(r)^{\delta_m}r^{s_{m + 1}})$ has no factorization of maximum length. It is not hard to see that using Lemma~\ref{lem: infinite factorization generates nonACCP sequence} one can generate a sequence $(y_n)_{n \geq 1}$ of elements of $M_{r,S}$ such that $y_1 > y_2 > \cdots$ and $y_{n + 1}\, |_{M_{r,S}} \,y_n$ for all $n\in\nn$. Therefore, $M_{r,S}$ does not satisfy the ACCP.
\end{proof}

If no exponential Puiseux monoid $M_{r,S}$ with $r < 1$ satisfies the ACCP then Theorem~\ref{prop: ACCP implies FF} holds trivially by~\cite[Theorem 5.6]{fG16}. However, this is far from being the case as we will show in the next section.

\section{Exponential Puiseux Monoids and the ACCP}

Now we proceed to study the ACCP in the context of exponential Puiseux monoids. We show that there are infinitely many exponential Puiseux monoids that satisfy the ACCP and infinitely many that do not. In this section, we provide a necessary condition and a sufficient condition for an exponential Puiseux monoid to satisfy the ACCP. 

\begin{prop} \label{prop: necessary condition for satisfying the ACCP}
	Let $M_{r,S}$ be an atomic exponential Puiseux monoid with $r < 1$. If $M_{r,S}$ satisfies the ACCP then
	\begin{equation} \label{eq: limsup}
	 	\mathsf{d}(r) \leq \mathsf{n}(r)\cdot \limsup_{n \to\infty} \sqrt[s_n]{\mathsf{n}(r)^{\delta_n}}.
	\end{equation}
\end{prop}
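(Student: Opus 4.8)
The plan is to establish the contrapositive: assuming \eqref{eq: limsup} fails, I will exhibit a single element of $M_{r,S}$ with infinitely many factorization lengths, so that $M_{r,S}$ is not a BFM and hence, by Theorem~\ref{prop: ACCP implies FF} (applied to the atomic monoid $M_{r,S}$), does not satisfy the ACCP. Write $n := \mathsf{n}(r)$ and $d := \mathsf{d}(r)$. Since $M_{r,S}$ is atomic with $r < 1$, Proposition~\ref{prop:atomicity of exponential PMs} gives $1 < n < d$ and $\mathcal{A}(M_{r,S}) = \{r^s \mid s \in S\}$, and for every $m \in \nn_0$ the identity $n^{\delta_m} r^{s_m} = d^{\delta_m} r^{s_{m+1}}$ holds in $M_{r,S}$ (it follows from $r^{s_{m+1}} = (n/d)^{\delta_m} r^{s_m}$). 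If \eqref{eq: limsup} fails then $L := \limsup_{m\to\infty} \sqrt[s_m]{n^{\delta_m}} < d/n$; in particular $L < \infty$, so I fix $\lambda \in (L, d/n)$ and an index $M_0 \ge 1$ with $n^{\delta_m} \le \lambda^{s_m}$ for all $m \ge M_0$, and I set $\mu := \lambda n / d \in (0,1)$.

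Now for the construction. Choose a positive integer $A$ with $A\,(n/d)^{s_{M_0}} \ge 1 + (1-\mu)^{-1}$ and let $x := A\, r^{s_{M_0}} \in M_{r,S}^{\bullet}$. The idea is to build a sequence $z_{M_0}, z_{M_0 + 1}, \dots$ in $\mathsf{Z}(x)$ of strictly increasing lengths by repeatedly ``refining at the top''. Start with $z_{M_0} := A\, r^{s_{M_0}}$. Given $z_m$ with a single top summand $c_m\, r^{s_m}$ (and no summands above level $m$) such that $c_m \ge n^{\delta_m}$, write $c_m = q_m\, n^{\delta_m} + \rho_m$ with $q_m \ge 1$ and $0 \le \rho_m < n^{\delta_m}$, and obtain $z_{m+1}$ from $z_m$ by replacing $c_m\, r^{s_m}$ with $\rho_m\, r^{s_m} + q_m\, d^{\delta_m}\, r^{s_{m+1}}$. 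By the identity above, $z_{m+1} \in \mathsf{Z}(x)$, its top coefficient is $c_{m+1} = q_m\, d^{\delta_m}$, and $|z_{m+1}| = |z_m| + q_m(d^{\delta_m} - n^{\delta_m}) > |z_m|$ since $d > n$.

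The heart of the matter is to check that the hypothesis forces $c_m \ge n^{\delta_m}$ for every $m \ge M_0$, so that the recursion never stops. I would normalize by putting $\tilde c_m := c_m\,(n/d)^{s_m}$. From $q_m \ge c_m n^{-\delta_m} - 1$ one gets $c_{m+1} \ge c_m (d/n)^{\delta_m} - d^{\delta_m}$, and multiplying through by $(n/d)^{s_{m+1}}$ and using $s_{m+1} - \delta_m = s_m$ turns this into
\[
\tilde c_{m+1} \ \ge\ \tilde c_m - d^{\delta_m}(n/d)^{s_{m+1}} \ =\ \tilde c_m - n^{\delta_m}(n/d)^{s_m} \ \ge\ \tilde c_m - \mu^{s_m},
\]
the last step using $n^{\delta_m} \le \lambda^{s_m}$ and $\lambda n/d = \mu$. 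Inducting on $m \ge M_0$ with this as the inductive step, and using that the $s_k$ are distinct positive integers (so $s_k \ge k$), telescoping gives $\tilde c_m \ge \tilde c_{M_0} - \sum_{k \ge M_0} \mu^{s_k} \ge \tilde c_{M_0} - (1-\mu)^{-1} \ge 1$; since $n^{\delta_m}(n/d)^{s_m} \le \mu^{s_m} \le 1 \le \tilde c_m$, this yields $c_m \ge n^{\delta_m}$, exactly what is needed to produce $z_{m+1}$. Hence $\{\,|z_m| : m \ge M_0\,\} \subseteq \mathsf{L}(x)$ is infinite, $M_{r,S}$ is not a BFM, and Theorem~\ref{prop: ACCP implies FF} shows $M_{r,S}$ does not satisfy the ACCP, which is the contrapositive we wanted.

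The hard part will be the normalized estimate above: one must pick $\lambda$ strictly below $d/n$ precisely so that the accumulated rounding losses $\sum_{m \ge M_0} \mu^{s_m}$ form a convergent geometric-type series, which is what keeps the top coefficient $c_m$ at or above $n^{\delta_m}$ forever; everything else is routine. As an alternative to the final appeal to Theorem~\ref{prop: ACCP implies FF}, if $A$ is additionally taken divisible by $d^{\delta_{M_0 - 1}}$, then $x = \pi\big((A/d^{\delta_{M_0-1}})\, d^{\delta_{M_0-1}} r^{s_{M_0}}\big)$ has no factorization of maximum length, and iterating Lemma~\ref{lem: infinite factorization generates nonACCP sequence} on $x$ produces directly a non-stabilizing chain of principal ideals.
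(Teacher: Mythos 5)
Your argument is correct, and its engine is the same as the paper's: starting from a large multiple of a single atom, you repeatedly apply the carrying identity $\mathsf{n}(r)^{\delta_m}r^{s_m}=\mathsf{d}(r)^{\delta_m}r^{s_{m+1}}$ to the top summand and maintain a lower bound on the normalized top coefficient $c_m(\mathsf{n}(r)/\mathsf{d}(r))^{s_m}$ guaranteeing the process never halts, so the chosen element has an infinite set of lengths and Theorem~\ref{prop: ACCP implies FF} rules out the ACCP. The difference is in the logical packaging. The paper first shows that the ACCP forces the series $\sum_{k\ge 0}(\mathsf{n}(r)^{\delta_k}-1)r^{s_k}$ to diverge---its invariant is that the normalized top coefficient dominates the exact tail of that series---and only afterwards deduces inequality~\eqref{eq: limsup} via the root test. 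You instead negate~\eqref{eq: limsup} directly, which hands you the geometric majorant $\mathsf{n}(r)^{\delta_m}(\mathsf{n}(r)/\mathsf{d}(r))^{s_m}\le\mu^{s_m}$ with $\mu<1$, and you control the accumulated rounding losses by $\sum_{k\ge M_0}\mu^{s_k}\le(1-\mu)^{-1}$ using $s_k\ge k$. Your route is self-contained (no appeal to the root test) and the telescoping estimate is cleaner, but it proves only the stated inequality, whereas the paper's intermediate step---divergence of $\sum(\mathsf{n}(r)^{\delta_k}-1)r^{s_k}$---is a strictly stronger necessary condition for the ACCP (that series may converge even when~\eqref{eq: limsup} holds with equality), so the paper's proof carries a little extra information that your shortcut discards. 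The individual estimates in your write-up check out, including the base-case choice of $A$, the strict increase of lengths $|z_{m+1}|-|z_m|=q_m(\mathsf{d}(r)^{\delta_m}-\mathsf{n}(r)^{\delta_m})>0$, and the reduction from atomicity to $1<\mathsf{n}(r)<\mathsf{d}(r)$.
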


\begin{proof}
	We prove that if $M_{r,S}$ satisfies the ACCP then 
		\begin{equation} \label{eq: series determining ACCP}
			\sum_{k = 0}^{\infty} \left(\mathsf{n}(r)^{\delta_{k}} - 1\right) r^{s_k}
		\end{equation}
	does not converge. For the sake of a contradiction, suppose that the series~\eqref{eq: series determining ACCP} (of positive terms) converges to a real number $R > 0$, and let $K\in\nn$ such that 
	\[
	K > R = r^{\delta_{-1}}\sum_{k = 0}^{\infty} \left(\mathsf{n}(r)^{\delta_{k}} - 1\right) r^{s_k}\!\!,
	\]
	where $\delta_{-1} = 0$. In addition, let $z_0 = K r^{s_0} \in\mathsf{Z}(K)$. We now prove that if $z_m = \sum_{i = 0}^{m} c_ir^{s_i}\in\mathsf{Z}(K)$ is a factorization of $K$ such that
	\[
	c_m > \prod_{j = -1}^{m - 1} \!\!r^{-\delta_j} \sum_{k = m}^{\infty} \left(\mathsf{n}(r)^{\delta_k} - 1\right)r^{s_k}
	\]
	then there exists a factorization $z_{m + 1} = \sum_{i = 0}^{m + 1} d_i r^{s_i} \in\mathsf{Z}(K)$, where
	\[
	d_{m + 1} > \prod_{j = -1}^{m}\!\! r^{-\delta_j} \sum_{k = m + 1}^{\infty} \left(\mathsf{n}(r)^{\delta_k} - 1\right)r^{s_k}\!\!.
	\]
	Note that $c_m > r^{s_m}\left(\mathsf{n}(r)^{\delta_m} - 1\right)\prod_{j = -1}^{m - 1} r^{-\delta_j}  = \mathsf{n}(r)^{\delta_m} - 1$, where the equality holds by Remark~\ref{rem: identity relating delta_i and s_m}. This implies that $c_m = h + H \mathsf{n}(r)^{\delta_m}$ with $H \geq 1$ and $h\in\llbracket 0,\mathsf{n}(r)^{\delta_m} - 1 \rrbracket$. Using the identity $\mathsf{n}(r)^{\delta_n}r^{s_n} = \mathsf{d}(r)^{\delta_n}r^{s_{n + 1}}$ we obtain a factorization $z_{m + 1}$ from $z_m$ in the following manner:
	\[
	\sum_{i = 0}^{m - 1} c_ir^{s_i} + c_m r^{s_m} = \sum_{i = 0}^{m - 1} c_ir^{s_i} + h r^{s_m} + H\mathsf{d}(r)^{\delta_m} r^{s_{m + 1}}\!\!,
	\]
	where
	\begin{equation} \label{eq: expanding coefficient}
	\begin{split}
	H \mathsf{d}(r)^{\delta_m} & = r^{-\delta_m}(c_m - h) \geq r^{-\delta_m}\left(c_m + 1 - \mathsf{n}(r)^{\delta_m} \right)\\
	& > r^{-\delta_m}\left( 1 - \mathsf{n}(r)^{\delta_m} + \prod_{j = -1}^{m - 1} r^{-\delta_j} \sum_{k = m}^{\infty} \left(\mathsf{n}(r)^{\delta_k} - 1\right)r^{s_k}  \right)\\
	& = \prod_{j = -1}^{m} r^{-\delta_j} \sum_{k = m + 1}^{\infty} \left(\mathsf{n}(r)^{\delta_k} - 1\right)r^{s_k}\!\!.
	\end{split}
	\end{equation}
	The last equality in Equation~\eqref{eq: expanding coefficient} follows from Remark~\ref{rem: identity relating delta_i and s_m}. By induction, we have $|\mathsf{Z}(K)| = \infty$, which implies that $M_{r,S}$ does not satisfy the ACCP by Theorem~\ref{prop: ACCP implies FF}. This contradiction proves that our hypothesis is untenable; hence the series~\ref{eq: series determining ACCP} does not converge. This, in turn, implies that the series $\sum_{k = 0}^{\infty} \mathsf{n}(r)^{\delta_{k}} r^{s_k}$ does not converge either, and our result follows by~\cite[Theorem 3.39]{Rudin}.
\end{proof}
\begin{cor} \label{cor: bounded deltas implies nonACCP}
	Let $M_{r,S}$ be an atomic exponential Puiseux monoid with $r < 1$. If there exists $k\in\nn$ such that $\delta_n < k$ for all $n\in\nn$ then $M_{r,S}$ does not satisfy the ACCP.
\end{cor}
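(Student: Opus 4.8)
The plan is to deduce the corollary directly from Proposition~\ref{prop: necessary condition for satisfying the ACCP}, used in its contrapositive form: if inequality~\eqref{eq: limsup} fails for $M_{r,S}$, then $M_{r,S}$ does not satisfy the ACCP. Consequently, the entire argument reduces to a single observation, namely that the hypothesis $\delta_n < k$ for all $n \in \nn$ makes~\eqref{eq: limsup} fail.

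To carry this out I would first record the elementary numerics. Since $r < 1$ we have $\mathsf{n}(r) < \mathsf{d}(r)$, and since $M_{r,S}$ is atomic, part~(2) of Proposition~\ref{prop:atomicity of exponential PMs} puts us in the case $\mathsf{n}(r) > 1$. Moreover, because exponential Puiseux monoids are assumed not to be finitely generated, the set $S$ is infinite, so the strictly increasing sequence $(s_n)_{n \geq 0}$ of nonnegative integers satisfies $s_n \to \infty$. Next I would estimate the $\limsup$ appearing in~\eqref{eq: limsup}. From $1 \leq \delta_n \leq k - 1$ for every $n \in \nn$ we obtain $1 \leq \mathsf{n}(r)^{\delta_n} \leq \mathsf{n}(r)^{k-1}$, hence
\[
1 \;\leq\; \sqrt[s_n]{\mathsf{n}(r)^{\delta_n}} \;\leq\; \mathsf{n}(r)^{(k-1)/s_n}
\]
for all $n$. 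Since the exponent $(k-1)/s_n$ tends to $0$ as $s_n \to \infty$, the right-hand side converges to $1$, so by the squeeze theorem $\lim_{n\to\infty}\sqrt[s_n]{\mathsf{n}(r)^{\delta_n}} = 1$; in particular $\limsup_{n\to\infty}\sqrt[s_n]{\mathsf{n}(r)^{\delta_n}} = 1$. Therefore $\mathsf{n}(r)\cdot\limsup_{n\to\infty}\sqrt[s_n]{\mathsf{n}(r)^{\delta_n}} = \mathsf{n}(r) < \mathsf{d}(r)$, which is exactly the negation of~\eqref{eq: limsup}. Applying the contrapositive of Proposition~\ref{prop: necessary condition for satisfying the ACCP} then finishes the proof: $M_{r,S}$ does not satisfy the ACCP.

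I do not expect any genuine obstacle here; the only point worth flagging is that the statement relies on the standing convention that exponential Puiseux monoids are not finitely generated, which is precisely what yields $s_n \to \infty$ (for finite $S$ the monoid $M_{r,S}$ is a numerical monoid up to isomorphism and does satisfy the ACCP). If one preferred to avoid quoting the $\limsup$ criterion, an equivalent route is to note directly that $\mathsf{n}(r)^{\delta_i} r^{s_i} \leq \mathsf{n}(r)^{k} r^{i}$ for every $i$ (using $\delta_i < k$, $\mathsf{n}(r)>1$, and $s_i \geq i$), so that the sequence $(\mathsf{n}(r)^{\delta_i} r^{s_i})_{i \geq 0}$ is dominated term by term by the convergent geometric series $\mathsf{n}(r)^{k}\sum_{i\geq 0} r^{i}$; the series $\sum_{i\geq 0}\mathsf{n}(r)^{\delta_i} r^{s_i}$ then converges, and one concludes by the series argument contained in the proof of Proposition~\ref{prop: necessary condition for satisfying the ACCP}. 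Routing through the stated proposition is the cleaner of the two.
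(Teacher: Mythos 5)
Your proposal is correct and is exactly the intended argument: the paper states the corollary without proof as an immediate consequence of Proposition~\ref{prop: necessary condition for satisfying the ACCP}, and your computation that bounded gaps force $\limsup_{n\to\infty}\sqrt[s_n]{\mathsf{n}(r)^{\delta_n}}=1$, so that~\eqref{eq: limsup} reduces to the false inequality $\mathsf{d}(r)\leq\mathsf{n}(r)$, is the deduction the authors leave implicit. The auxiliary details you supply (atomicity forcing $\mathsf{n}(r)>1$, the standing convention that $S$ is infinite so $s_n\to\infty$) are all correctly justified.
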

Note that Corollary~\ref{cor: bounded deltas implies nonACCP} is a generalization of~\cite[Corollary 4.4]{CGG2019}. On the other hand, the converse of Proposition~\ref{prop: necessary condition for satisfying the ACCP} does not hold as the following example illustrates.

\begin{example} \label{ex: reverse implication does not hold}
	Let $a,b \in\nn$ such that $1 < a < b$. We start by constructing, iteratively, a sequence of rational numbers $(q_n)_{n \geq 1}$ converging to $\log_a b$ such that $\mathsf{n}(q_n) = \mathsf{d}(q_{n + 1})$ and  $1 < q_n < \log_a b$ for all $n \in\nn$. Let $n_1 \in\nn$ such that $1/n_1 < \log_a b - 1$. It is not hard to see that there exists a rational number $q_1$ such that $\log_a b - 1/n_1 \leq q_1 < \log_a b$ and $\mathsf{d}(q_1) = n_1$. Note that $\mathsf{n}(q_1) > n_1$ since $q_1 > 1$. Now assume that $q_1, \ldots, q_k \in \qq_{>1}$ have been defined such that $\log_a b - 1/\mathsf{d}(q_i) \leq q_i < \log_a b$ for all $i \in \llbracket 1,k \rrbracket$ and $\mathsf{n}(q_i) = \mathsf{d}(q_{i + 1})$  for all $i \in\llbracket 1,k - 1 \rrbracket$. Then there exists a rational number $q_{k + 1}$ such that $\log_a b - 1/\mathsf{n}(q_k) \leq q_{k + 1} < \log_a b$ and $\mathsf{d}(q_{k + 1}) = \mathsf{n}(q_k) < \mathsf{n}(q_{k + 1})$. It is easy to see that the sequence $(q_n)_{n \geq 1}$ satisfies the aforementioned requirements.
	
	Consider the exponential Puiseux monoid $M_{r,S}$ where $r = a/b$ and $\delta_n = \mathsf{d}(q_{n + 1})$ for all $n \in\nn_0$. Clearly, $M_{r,S}$ is atomic by Proposition~\ref{prop:atomicity of exponential PMs}. On the other hand, note that $\delta_{n + 1}/\delta_n = q_{n + 1}$ for all $n \in\nn_0$. Since $\delta_{n + 1}/\delta_n < \log_{\mathsf{n}(r)} \mathsf{d}(r)$ for all $n \in\nn_0$, it is not hard to see that $\mathsf{d}(r)^{\delta_n} > \mathsf{n}(r)^{\delta_{n + 1}}$ for all $n \in\nn_0$, which implies that $M_{r,S}$ does not satisfy the ACCP by virtue of the identity
	\[
		\mathsf{n}(r)^{\delta_n}r^{s_n}= \left(\mathsf{d}(r)^{\delta_n}-\mathsf{n}(r)^{\delta_{n+1}}\right)r^{s_{n+1}} + \mathsf{n}(r)^{\delta_{n+1}}r^{s_{n+1}}\!\!.
	\]
	Let $R = \log_{\mathsf{n}(r)} \mathsf{d}(r)$. Next, we proceed to show (through some cumbersome computations) that $\limsup_{n \to\infty} \delta_n/s_n = R - 1$ holds. We have
	
	\begin{align*}
		\limsup_{n \to\infty} \frac{\delta_n}{s_n} & = \limsup_{n \to\infty} \frac{\delta_n}{\delta_0 + \cdots + \delta_{n-1}} \hspace{.3cm}(\text{by Remark }\ref{rem: identity relating delta_i and s_m})\\
		& = \limsup_{n \to\infty} \frac{\delta_n}{\delta_{n-1}} \left( \frac{1}{\frac{\delta_0}{\delta_{n - 1}} +  \cdots + \frac{\delta_{n - 2}}{\delta_{n - 1}} + 1} \right)\\
		& = \limsup_{n \to\infty} \frac{\delta_n}{\delta_{n-1}} \left( \frac{1}{         \frac{\delta_0}{\delta_{1}}\frac{\delta_1}{\delta_2} \cdots \frac{\delta_{n - 2}}{\delta_{n - 1}}        +  \cdots + \frac{\delta_{n - 2}}{\delta_{n - 1}} + 1} \right)\\
		& \leq \limsup_{n \to\infty} R \left( \frac{1}{ \frac{1}{R^{n - 1}} + \cdots + \frac{1}{R} + 1 } \right) \hspace{.3cm}\left(\text{since }\delta_{n + 1}/\delta_n < R \text{ for all }n \in \nn_0\right)\\
		& = \limsup_{n \to\infty} R \left( \frac{R^{n - 1}}{1 + R + \cdots + R^{n - 1}} \right)\\
		& = \limsup_{n \to\infty} \frac{R^{n + 1} - R^n}{R^n - 1} = R - 1.
	\end{align*}
	
	Now let $\epsilon > 0$. Since the sequence $(\delta_n/\delta_{n + 1})_{n \geq 0}$ converges to $R^{-1}$ and $\delta_n/\delta_{n + 1} > R^{-1}$ for all $n \in\nn_0$, there exists $t \in \nn$ such that $\delta_k/\delta_{k + 1} - R^{-1} < \epsilon/R$ for all $k \in\nn_{\geq t}$. Let $T \in\nn_{\geq t}$ such that $\delta_T/\delta_{T + 1} \geq \delta_k/\delta_{k + 1}$ for all $k \in \nn_{\geq t}$. Thus,
	\begin{align*}
		\limsup_{n \to\infty} \frac{\delta_n}{s_n} & = \limsup_{n \to\infty} \frac{\delta_n}{\delta_0 + \cdots + \delta_{n-1}}\\
		& = \limsup_{n \to\infty} \frac{\delta_n}{\delta_{n-1}} \left( \frac{1}{\frac{\delta_0}{\delta_{n - 1}} +  \cdots + \frac{\delta_{n - 2}}{\delta_{n - 1}} + 1} \right)\\
		& = \limsup_{n \to\infty} \frac{\delta_n}{\delta_{n-1}} \left( \frac{1}{\frac{\delta_t}{\delta_{n - 1}} +  \cdots + \frac{\delta_{n - 2}}{\delta_{n - 1}} + 1} \right)\\
		& = \limsup_{n \to\infty} \frac{\delta_n}{\delta_{n-1}} \left( \frac{1}{         \frac{\delta_t}{\delta_{t + 1}}\frac{\delta_{t + 1}}{\delta_{t + 2}} \cdots \frac{\delta_{n - 2}}{\delta_{n - 1}}        +  \cdots + \frac{\delta_{n - 2}}{\delta_{n - 1}} + 1} \right) \\
		& \geq R \cdot \limsup_{n \to\infty} \left( \frac{1}{ \frac{\delta_T}{\delta_{T + 1}}^{n - t - 1} + \cdots + \frac{\delta_T}{\delta_{T + 1}} + 1 } \right)\\
		& = R \cdot \limsup_{n \to\infty}  \frac{1 - \frac{\delta_T}{\delta_{T + 1}}}{1 - \left(\frac{\delta_T}{\delta_{T + 1}}\right)^{n - t}}
		\end{align*}
		
		\begin{align*}
		& = R \cdot \left(1 - \frac{\delta_T}{\delta_{T + 1}}\right) = R \cdot \left( 1 - \frac{1}{R} + \frac{1}{R} - \frac{\delta_T}{\delta_{T + 1}} \right)\\
		& = R - 1 - R \cdot \left(\frac{\delta_T}{\delta_{T + 1}}  - \frac{1}{R}\right) \geq R - 1 - \epsilon.
	\end{align*}
	Since $\epsilon$ is an arbitrary positive real number, $\limsup_{n \to\infty} \delta_n/s_n \geq R - 1$. Note that $\mathsf{n}(r) \limsup_{n \to\infty} \sqrt[s_n]{\mathsf{n}(r)^{\delta_n}} = \mathsf{n}(r)^R = \mathsf{d}(r)$, which implies that $M_{r,S}$ satisfies Equation~\eqref{eq: limsup}. However, the monoid $M_{r,S}$ does not satisfy the ACCP as was already proved.
\end{example}

 We now provide a sufficient condition for an exponential Puiseux monoid to satisfy the ACCP, but first let us prove a lemma.

\begin{lemma} \label{lem: truncation does not affect the atomic properties}
	Let $M_{r,S}$ be an atomic exponential Puiseux monoid, and let $s_i$ be an element of $S$. Then $M_{r,S}$ satisfies the ACCP if and only if $M_{r, S - s_i}$ satisfies the ACCP.
\end{lemma}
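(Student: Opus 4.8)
The plan is to understand the exponential Puiseux monoid $M_{r, S - s_i}$ as a \say{shift} of $M_{r,S}$ by the factor $r^{-s_i}$, and to transfer the ACCP back and forth along this shift. Concretely, write $S - s_i = \{0 = s_i - s_i < s_{i+1} - s_i < \cdots\}$, so that the generators of $M_{r, S-s_i}$ are exactly $r^{s_n - s_i}$ for $n \geq i$; multiplying every generator by $r^{s_i}$ recovers the monoid $N \coloneqq \langle r^{s_n} \mid n \geq i \rangle \subseteq M_{r,S}$. Thus $M_{r, S - s_i}$ and $N$ are isomorphic as monoids (the map $x \mapsto r^{s_i} x$ is a monoid isomorphism between them, since it is additive, injective, and carries the generating set onto the generating set), and the ACCP is an isomorphism-invariant property. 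So the real content is to prove that $M_{r,S}$ satisfies the ACCP if and only if $N$ does. Note that by Proposition~\ref{prop:atomicity of exponential PMs}, since $M_{r,S}$ is atomic we are in case (1) or (3); in case (1) everything is $\nn_0$ and the statement is trivial, so I would assume $\mathsf{n}(r) > 1$ and $\mathsf{d}(r) > 1$, in which case $\mathcal{A}(M_{r,S}) = \{r^s \mid s \in S\}$ and $\mathcal{A}(N) = \{r^{s_n} \mid n \geq i\}$.

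Next I would establish the two implications between $M_{r,S}$ and $N$. The direction \say{$M_{r,S}$ satisfies the ACCP $\Rightarrow$ $N$ satisfies the ACCP} is immediate: $N$ is a submonoid of $M_{r,S}$, and if $(x_n)_{n\geq 1}$ is a sequence in $N$ with $x_{n+1} \mid_N x_n$, then also $x_{n+1} \mid_{M_{r,S}} x_n$ (the witness lies in $N \subseteq M_{r,S}$), so the chain of principal ideals $x_n + N$ stabilizes because the corresponding chain $x_n + M_{r,S}$ does — more carefully, since $M_{r,S}$ is reduced and cancellative, $x_{n+1} + M_{r,S} = x_n + M_{r,S}$ forces $x_{n+1} = x_n$, and this already gives stabilization of the $N$-chain. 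For the converse, \say{$N$ satisfies the ACCP $\Rightarrow$ $M_{r,S}$ satisfies the ACCP}, the key observation is that $M_{r,S}$ differs from $N$ only in the finitely many \say{small} atoms $r^{s_0}, \ldots, r^{s_{i-1}}$. I would take a sequence $(x_n)_{n\geq 1}$ in $M_{r,S}$ with $x_{n+1} \mid_{M_{r,S}} x_n$ and use the fact, following from Lemma~\ref{Lemma 2.2}(1), that each $x_n$ has a canonical factorization of minimum length in which the coefficient of $r^{s_j}$ is at most $\mathsf{d}(r)^{\delta_{j-1}} - 1$ for every $j \geq 1$; the \say{low part} $\sum_{j=0}^{i-1} c_j^{(n)} r^{s_j}$ of this canonical factorization is then bounded (there are only finitely many possibilities for the tuple $(c_0^{(n)},\ldots,c_{i-1}^{(n)})$). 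Since $x_{n+1} \mid x_n$ forces the corresponding chain of values $x_1 \geq x_2 \geq \cdots$ to be nonincreasing, one argues that along a tail the low part is eventually constant and the remaining high part lies in $N$ and forms a divisibility chain there, so it stabilizes by hypothesis.

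The step I expect to be the main obstacle is the last one: carefully controlling how divisibility in $M_{r,S}$ interacts with the canonical minimal-length factorization, so that one can genuinely peel off a bounded \say{low part} and reduce to a divisibility chain inside $N$. The subtlety is that $x_{n+1} \mid_{M_{r,S}} x_n$ does not literally mean the canonical factorization of $x_{n+1}$ sits inside that of $x_n$ — the difference $x_n - x_{n+1}$ must be re-expanded via the relations $\mathsf{n}(r)^{\delta_k} r^{s_k} = \mathsf{d}(r)^{\delta_k} r^{s_{k+1}}$, and carries can propagate. I would handle this either by (a) passing to the minimal-length factorizations and using monotonicity of the coefficient vectors under the greedy (carry-upward) reduction together with the pigeonhole principle on the finitely many low-part tuples, to extract an index $k$ past which the low part is frozen, after which dividing everything by the frozen low part lands us in $N$; or (b) more slickly, observing that $\langle r^{s_0}, \ldots, r^{s_{i-1}} \rangle$ together with $N$ covers $M_{r,S}$ and that a divisibility chain, after finitely many steps, cannot keep consuming the finitely many small atoms $r^{s_0}, \ldots, r^{s_{i-1}}$ infinitely often while staying nonincreasing in value — so eventually the chain \say{lives in} a coset determined by a fixed contribution from the small atoms, reducing to the ACCP for $N$. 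Either route reduces the problem to the ACCP of $N$, which is the isomorphic copy of $M_{r, S - s_i}$, completing the equivalence.
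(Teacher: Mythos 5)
Your proposal is correct and follows essentially the same route as the paper: the forward direction via the identification $x \mapsto r^{s_i}x$ of $M_{r,S-s_i}$ with a submonoid of $M_{r,S}$, and the converse by pigeonholing the (bounded) coefficients of the low atoms in the minimal-length factorizations to freeze the low part along a subsequence before dividing by $r^{s_i}$. The one point to add is that Lemma~\ref{Lemma 2.2} is stated only for $r<1$, so the case $r\geq 1$ should be dispatched separately first, as the paper does by citing that increasing Puiseux monoids are FFMs and hence satisfy the ACCP.
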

\begin{proof}
	If $r \geq 1$ then our result follows trivially by~\cite[Theorem 5.6]{fG16}. Consequently, one may assume without loss that $r < 1$. Suppose that $M_{r,S - s_i}$ does not satisfy the ACCP, and let $(x_n')_{n \geq 1}$ be a sequence of elements of $M_{r,S - s_i}^{\bullet}$ such that $x_n' = x_{n + 1}' + y_n'$ for some $y_n' \in M_{r,S - s_i}^{\bullet}$ and for all $n \in\nn$. Now set $x_n = r^{s_i}x_n'$ and $y_n = r^{s_i}y_n'$ for all $n\in\nn$. Note that $(x_n)_{n \geq 1}$ is a sequence of elements of $M_{r,S}^{\bullet}$ such that $x_n = x_{n + 1} + y_n$ for some $y_n\in M_{r,S}^{\bullet}$. Then $M_{r,S}$ does not satisfy the ACCP, and the direct implication follows. 
	
	We use a similar argument for the reverse implication: Let $(x_n)_{n \geq 1}$ be a sequence of elements of $M_{r,S}^{\bullet}$ such that $x_n = x_{n + 1} + y_n$ for some $y_n \in M_{r,S}^{\bullet}$ and for all $n \in\nn$. Fix $K \in\nn_0$, and let $C_K = \{m\in\nn : mr^K \,|_{M_{r,S}} \,x_n \text{ for some } \,n\in\nn\}$. Since $(x_n)_{n \geq 1}$ is a bounded sequence, $C_K$ is a finite set. Consequently, there are infinitely many elements of the sequence $(x_n)_{n \geq 1}$ whose factorization of minimum length share the same coefficient for the atom $r^K$\!. This, in turn, implies that there is no loss in assuming that $r^K$ does not show up in the factorization of minimum length of $x_n$ for any $n \in\nn$. By the same token, we may assume that, for each $n \in\nn$, $r^{s_m}$ does not show up in the factorization of minimum length of $x_n$ for any $m\in\llbracket 0,i \rrbracket$. Now set $x_n' = r^{-s_i}x_n$ and $y_n' = r^{-s_i}y_n$ for all $n\in\nn$. It is not hard to see that $(x_n')_{n \geq 1}$ is a sequence of elements of $M_{r,S - s_i}^{\bullet}$ such that $x_n' = x_{n + 1}' + y_n'$ for some $y_n'\in M_{r,S - s_i}^{\bullet}$ and our proof concludes.
\end{proof}

\begin{prop} \label{prop: sufficient condition to satisfy the ACCP}
	Let $M_{r,S}$ be an atomic exponential Puiseux monoid with $r < 1$. If there exists $m\in\nn_0$ such that $\mathsf{d}(r)^{\delta_n} < \mathsf{n}(r)^{\delta_{n+1}}$ for $n \geq m$ then $M_{r,S}$ satisfies the ACCP.
\end{prop}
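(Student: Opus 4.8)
The plan is to prove the stronger statement that $M_{r,S}$ is a BFM; the ACCP then follows at once from \cite[Corollary~1.3.3]{GH06b}. Concretely, I will show that every $x \in M_{r,S}^{\bullet}$ admits a factorization of maximum length. First I would dispose of the index $m$ by truncation: if $\mathsf{d}(r)^{\delta_n} < \mathsf{n}(r)^{\delta_{n+1}}$ for all $n \geq m$, then the exponential Puiseux monoid $M_{r,\, S - s_m}$ is again atomic (it is built over the same base $r$, so Proposition~\ref{prop:atomicity of exponential PMs} and the hypothesis $r<1$ place it in case $(3)$), and its gap sequence is $(\delta_{m+k})_{k \geq 0}$, so it satisfies $\mathsf{d}(r)^{\delta_k'} < \mathsf{n}(r)^{\delta_{k+1}'}$ for \emph{every} $k \in \nn_0$; by Lemma~\ref{lem: truncation does not affect the atomic properties} it suffices to treat $M_{r, S - s_m}$. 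Hence we may assume $\mathsf{d}(r)^{\delta_n} < \mathsf{n}(r)^{\delta_{n+1}}$ holds for all $n \in \nn_0$.

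Now fix $x \in M_{r,S}^{\bullet}$ and pick any $z = \sum_{i=0}^{n} c_i r^{s_i} \in \mathsf{Z}(x)$ with $c_n \neq 0$. Using only the identity $\mathsf{n}(r)^{\delta_i} r^{s_i} = \mathsf{d}(r)^{\delta_i} r^{s_{i+1}}$ (which always returns a factorization of $x$), I would normalize $z$ in two stages. Stage one is the move already used in the second paragraph of the proof of Theorem~\ref{prop: ACCP implies FF}: process the indices $i = 0, 1, \dots, n-1$ in increasing order, each time applying the identity at index $i$ until the coefficient of $r^{s_i}$ falls below $\mathsf{n}(r)^{\delta_i}$. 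This takes finitely many steps, never raises the top index above $n$, and — since the coefficient at an index is never altered again once it has been processed — leaves a factorization $\sum_{i=0}^{n} c_i' r^{s_i}$ with $c_i' < \mathsf{n}(r)^{\delta_i}$ for all $i \in \llbracket 0, n-1 \rrbracket$. Stage two handles the (possibly large) top coefficient: while the current top index $N$ (starting from $N = n$) has coefficient $c_N \geq \mathsf{n}(r)^{\delta_N}$, write $c_N = q_N\, \mathsf{n}(r)^{\delta_N} + t_N$ with $q_N \in \nn$ and $0 \leq t_N < \mathsf{n}(r)^{\delta_N}$, and apply the identity at index $N$ exactly $q_N$ times; this replaces $c_N$ by $t_N$ and produces a new top index $N+1$ whose coefficient is $q_N\, \mathsf{d}(r)^{\delta_N}$.

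The heart of the argument — and the only place the hypothesis is used — is that stage two terminates: if the new top index $N+1$ must in turn be processed, its quotient is
\[
q_{N+1} \;=\; \left\lfloor \frac{q_N\, \mathsf{d}(r)^{\delta_N}}{\mathsf{n}(r)^{\delta_{N+1}}} \right\rfloor \;<\; q_N,
\]
because $\mathsf{d}(r)^{\delta_N} < \mathsf{n}(r)^{\delta_{N+1}}$; thus $q_n > q_{n+1} > \cdots$ is a strictly decreasing sequence of positive integers and the process stops after finitely many steps. The resulting factorization $z^{*} = \sum_{i=0}^{N^{*}} d_i r^{s_i}$ satisfies $d_i < \mathsf{n}(r)^{\delta_i}$ for every $i \in \llbracket 0, N^{*} \rrbracket$ (for $i < n$ by stage one, for $n \leq i < N^{*}$ because index $i$ was processed and then left untouched, and for $i = N^{*}$ because that is precisely the stopping condition), so by Lemma~\ref{Lemma 2.2}(3) we have $\max \mathsf{L}(x) = |z^{*}|$. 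Since $M_{r,S}$ is atomic, $\mathsf{L}(x)$ is a nonempty subset of $\llbracket 1, |z^{*}| \rrbracket$, hence finite; therefore $M_{r,S}$ is a BFM and in particular satisfies the ACCP. I expect the only real difficulty to be presenting the two-stage normalization cleanly enough that the termination of stage two, and the fact that the final coefficients at \emph{all} indices lie below the thresholds of Lemma~\ref{Lemma 2.2}(3), are transparent; the underlying estimate is the one displayed above.
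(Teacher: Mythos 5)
Your argument is correct, and it takes a genuinely different route from the paper's. Both proofs begin with the same reduction via Lemma~\ref{lem: truncation does not affect the atomic properties} to the case $\mathsf{d}(r)^{\delta_n} < \mathsf{n}(r)^{\delta_{n+1}}$ for all $n \in \nn_0$, but they diverge after that. The paper argues by contradiction: given a non-stabilizing chain $x_n = x_{n+1} + y_n$, it examines the \emph{minimum}-length factorizations of the $x_n$, uses the hypothesis to force the unique index $j$ with $c_{n,j} \geq \mathsf{n}(r)^{\delta_j}$ to be $j = 0$ (minimality gives $c_{n,i} < \mathsf{d}(r)^{\delta_{i-1}} < \mathsf{n}(r)^{\delta_i}$ for $i \geq 1$), and then shows that the constant coefficients $c_{n,0}$ form a strictly decreasing sequence of positive integers. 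You instead prove the stronger statement that every nonzero element admits a factorization of maximum length, via an explicit two-stage carrying procedure whose termination is precisely where the hypothesis enters: the successive quotients satisfy $q_{N+1} \leq q_N\,\mathsf{d}(r)^{\delta_N}/\mathsf{n}(r)^{\delta_{N+1}} < q_N$, so the carry dies out in finitely many steps, and the converse direction of Lemma~\ref{Lemma 2.2}(3) certifies the output as the maximum-length factorization; the BFP and hence the ACCP follow. Your route is constructive, delivers the bounded factorization property directly, and avoids the delicate claim in the paper's proof that each $x_n$ in a non-stabilizing chain has no factorization of maximum length; the paper's route is shorter on the page and stays within the machinery of minimum-length factorizations that it reuses elsewhere. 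One cosmetic point: after truncating you establish the BFP for $M_{r,\,S - s_m}$, and Lemma~\ref{lem: truncation does not affect the atomic properties} only transfers the ACCP back to $M_{r,S}$ --- which is all the proposition asks for, and Theorem~\ref{prop: ACCP implies FF} upgrades it to the BFP for $M_{r,S}$ anyway.
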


\begin{proof}
	By virtue of Lemma~\ref{lem: truncation does not affect the atomic properties}, one may assume that $\mathsf{d}(r)^{\delta_n} < \mathsf{n}(r)^{\delta_{n + 1}}$ for all $n \in\nn_0$. By way of contradiction, suppose that $M_{r,S}$ does not satisfy the ACCP. Then there exists a sequence $(x_n)_{n \geq 1}$ of elements of $M_{r,S}^{\bullet}$ such that $x_n = x_{n + 1} + y_n$ for some $y_n \in M_{r,S}^{\bullet}$ and for all $n \in\nn$. Let $z_n = \sum^{m_n}_{i=0} c_{n,i}\, r^{s_{i}}$ with $m_n, c_{n,i} \in\nn_0$ for each $i \in\llbracket 0,m_n \rrbracket$ be the factorization of minimum length of $x_n$ for each $n \in\nn$. Take an arbitrary $n \in\nn$. Clearly, $x_n$ has no factorization of maximum length which implies that there exists $j \in\llbracket 0,m_n \rrbracket$ such that $c_{n,j} \geq \mathsf{n}(r)^{\delta_j}$ by Lemma~\ref{Lemma 2.2} (part $(3)$). Then $j = 0$; otherwise, $z_n$ is not the factorization of minimum length of $x_n$ given that $\mathsf{n}(r)^{\delta_j} > \mathsf{d}(r)^{\delta_{j - 1}}$. Without loss of generality, we may assume that there exists $i_n \in \llbracket 0,m_{n+1} \rrbracket$ such that $c_{n,i_n} = 0$ and $c_{n+1,i_n} \neq 0$.
	
	We shall prove that $c_{n,0} > c_{n+1,0}$. Take $z \in\mathsf{Z}(y_n)$ and let $z'_1 = z_{n+1} + z \in\mathsf{Z}(x_n)$. By applying the identity $\mathsf{d}(r)^{\delta_k}r^{s_{k + 1}} = \mathsf{n}(r)^{\delta_k}r^{s_k}$ with $k \in \nn_0$ finitely many times, one can obtain a sequence $z'_1, z'_2, \ldots, z'_l$ of distinct factorizations of $x_n$ such that $|z'_i| > |z'_{i + 1}|$ for all $i \in\llbracket 1, l - 1 \rrbracket$ and $z'_l = z_n$. Since $c_{n,i_n} = 0$ and $c_{n+1,i_n} \neq 0$ for some $i_n \in \llbracket 0,l_{n+1} \rrbracket$, we have $z'_1 \neq z'_l$. Note that, due to the nature of the transformation $\mathsf{d}(r)^{\delta_k}r^{s_{k + 1}} = \mathsf{n}(r)^{\delta_k}r^{s_k}$, where $k \in \nn_0$, the coefficient of $r^0$ in $z'_i$ is less than or equal to the corresponding coefficient in $z'_{i + 1}$ for all $i \in\llbracket 1, l - 1 \rrbracket$. Since $c_{n,i} < \mathsf{d}(r)^{\delta_{i - 1}} < \mathsf{n}(r)^{\delta_i}$ for each $i \in\llbracket 1,m_n \rrbracket$ by Lemma~\ref{Lemma 2.2}, we have that $c_{n,0}$ is strictly bigger than the coefficient of $r^0$ in $z'_{l - 1}$ which, in turn, implies that $c_{n,0} > c_{n + 1,0}$. This is a contradiction as there is no infinite strictly decreasing sequence of positive integers. Hence $M_{r,S}$ satisfies the ACCP.  
\end{proof}

\begin{remark} \label{rem: second necessary condition to satisfy the ACCP}
	Note that if there exists $m\in\nn$ such that $\mathsf{d}(r)^{\delta_n} > \mathsf{n}(r)^{\delta_{n+1}}$ for $n \geq m$ then $M_{r,S}$ does not satisfy the ACCP since 
	\[
		\mathsf{n}(r)^{\delta_n}r^{s_n}= \left(\mathsf{d}(r)^{\delta_n}-\mathsf{n}(r)^{\delta_{n+1}}\right)r^{s_{n+1}} + \mathsf{n}(r)^{\delta_{n+1}}r^{s_{n+1}}
	\]
	as we pointed out in Example~\ref{ex: reverse implication does not hold}.
\end{remark}

Using Proposition~\ref{prop: sufficient condition to satisfy the ACCP} we can easily construct many different examples of exponential Puiseux monoids satisfying the ACCP. On the other hand, Remark~\ref{rem: second necessary condition to satisfy the ACCP} provides us with a large number of exponential Puiseux monoids not satisfying the ACCP. Consider the following example.

\begin{example}
	Let $p(x)$ be a non-constant polynomial with positive integer coefficients, and consider the exponential Puiseux monoid $M_{r,S}$ with $r \in \qq_{< 1}$ and $\delta_n = p(n)$ for all $n \in\nn_0$. Note that $\mathsf{d}(r)^{\delta_n} > \mathsf{n}(r)^{\delta_{n + 1}}$ if and only if $\frac{\ln \mathsf{d}(r)}{\ln \mathsf{n}(r)} > \frac{\delta_{n + 1}}{\delta_n}$. Since
	\[
		\lim_{n \to\infty} \frac{\delta_{n + 1}}{\delta_n} = \lim_{n \to\infty} \frac{p(n + 1)}{p(n)} = 1,
	\]
	there exists $N \in\nn$ such that $\frac{\ln \mathsf{d}(r)}{\ln \mathsf{n}(r)} > \frac{\delta_{n + 1}}{\delta_n}$ for all $n \geq N$. Then $M_{r,S}$ does not satisfy the ACCP.
\end{example}

\section{Exponential Puiseux Semirings}

Exponential Puiseux monoids are generalizations of rational cyclic Puiseux monoids which are closed under multiplication and thus semirings. Motivated by this, we study the exponential Puiseux monoids that are also closed under the standard multiplication of $\qq$. Throughout this section, we allow exponential Puiseux monoids to be finitely generated.

A \emph{commutative semiring} $S$ is a nonempty set endowed with two binary operations (called addition and multiplication and denoted by + and $\cdot$ respectively) satisfying the following conditions:
\begin{enumerate}
	\item $(S,+)$ is a monoid with identity element $0$;
	\item $(S, \cdot)$ is a commutative semigroup with identity element $1$;
	\item for $a,b,c \in S$ we have $(a + b)\cdot c = a\cdot c + b\cdot c$;
	\item $0 \cdot a = 0$ for all $a \in S$.
\end{enumerate}

The more general definition of a `semiring' does not assume that the semigroup $(S, \cdot)$ is commutative; however, this more general type of algebraic object is not of interest for us here. Accordingly, from now on we use the single term \emph{semiring}, implicitly assuming commutativity. For extensive background information on semirings, we refer readers to the monograph~\cite{JG1999} of J. S. Golan.
\smallskip

During the last decade, semirings have received some attention in the context of factorization theory. For instance, in~\cite{CCMS09} the authors investigated, for quadratic algebraic integers $\tau$, the elasticity of the multiplicative structure of the semiring $\nn_0[\tau]$ and in~\cite{vP15}, factorization aspects of semigroup semirings were studied. More recently, a systematic investigation of the factorizations of the multiplicative structure of the semiring $\nn_0[x]$ was provided in \cite{CF19}. Finally and most relevant to this work, the atomicity of the additive and multiplicative structures of rational cyclic semirings were considered in~\cite{CGG19} and~\cite[Section~3]{NBFG2020}, respectively.

We now show that the subsets $N$ of $\nn_0$ for which an exponential Puiseux monoid $M_{r,N}$ is a semiring are precisely the numerical monoids.

\begin{prop} \label{prop: characterization of exponential Puiseux semirings}
	Take $r \in \qq_{> 0} \setminus \nn$ such that $\mathsf{n}(r) \neq 1$ and let $N$ be a nonempty subset of $\nn_0$. Consider the Puiseux monoid $(S_{r,N},+) = \langle r^n \mid n \in N \rangle$. Then $N$ is isomorphic to a numerical monoid if and only if $S_{r,N}$ is a semiring.
\end{prop}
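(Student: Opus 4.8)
The plan is to reduce the proposition to the single membership fact, call it $(\star)$: for every $a\in\nn_0$ one has $r^a\in S_{r,N}$ if and only if $a\in N$. Granting $(\star)$, both implications are short. For the ``if'' direction, suppose $N$ is isomorphic to a numerical monoid, so that $N$ is a submonoid of $(\nn_0,+)$; then $0\in N$, so the multiplicative identity $1=r^0$ of $\qq$ lies in $S_{r,N}$, and for all $m,n\in N$ we have $m+n\in N$, hence $r^m r^n=r^{m+n}\in S_{r,N}$; by distributivity $S_{r,N}$ is therefore closed under multiplication, and the remaining semiring axioms are inherited from $\qq$. For the ``only if'' direction, suppose $S_{r,N}$ is a semiring; since $N\neq\emptyset$, it contains a nonzero element $r^n$, and its multiplicative identity $e$ satisfies $e r^n=r^n$, so $e=1$ and $1=r^0\in S_{r,N}$; now $(\star)$ with $a=0$ gives $0\in N$, and for $m,n\in N$ closure under multiplication gives $r^{m+n}=r^m r^n\in S_{r,N}$, so $(\star)$ forces $m+n\in N$. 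Thus $N$ is a submonoid of $(\nn_0,+)$, hence isomorphic to a numerical monoid (divide by $\gcd N$).

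The remaining work is to prove $(\star)$; only the forward direction $r^a\in S_{r,N}\Rightarrow a\in N$ needs an argument, and I would prove it by strong induction on $a$, allowing $N$ to be an arbitrary nonempty subset of $\nn_0$. Fix a representation $r^a=\sum_{n\in N}c_n r^n$ with $c_n\in\nn_0$ of finite support, not all zero, and let $J$ be the largest exponent occurring. First, $a\le J$ always: if $a>J$ then multiplying through by $\mathsf{d}(r)^a$ exhibits $\mathsf{n}(r)^a$ as a nonnegative integer divisible by $\mathsf{d}(r)$, impossible since $\mathsf{d}(r)>1$ is coprime to $\mathsf{n}(r)$. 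For the base case $a=0$: if every exponent occurring were positive, then for any prime $q\mid\mathsf{n}(r)$ (one exists as $\mathsf{n}(r)>1$) the $q$-adic valuation of $\sum_n c_n r^n$ would be at least $\mathsf{v}_q(\mathsf{n}(r))\ge1$, contradicting $\mathsf{v}_q(1)=0$; hence $0\in N$. For the inductive step $a\ge1$: if the constant term $c_0$ is $0$, then every exponent is $\ge1$, so dividing the representation by $r$ gives $r^{a-1}\in S_{r,\,N-1}$ with $N-1=\{n-1:n\in N,\ n\ge1\}$, and the induction hypothesis yields $a-1\in N-1$, that is $a\in N$. If $c_0\neq0$, then multiplying through by $\mathsf{d}(r)^J$ and reducing modulo $\mathsf{n}(r)$ (using $a\ge1$ and $\gcd(\mathsf{n}(r),\mathsf{d}(r))=1$) shows $\mathsf{n}(r)\mid c_0$; the identity $\mathsf{n}(r)r^0=\mathsf{d}(r)r^1$ then lets us replace $c_0 r^0$ by $(c_0/\mathsf{n}(r))\,\mathsf{d}(r)\,r^1$, producing a representation of $r^a$ with all exponents $\ge1$, so dividing by $r$ gives $r^{a-1}\in S_{r,\,\{0\}\cup(N-1)}$; the induction hypothesis then forces $a-1\in\{0\}\cup(N-1)$, and since $a-1=0$ is impossible here (it would force $1=r^0\ge(c_0/\mathsf{n}(r))\,\mathsf{d}(r)\ge2$) we get $a-1\in N-1$, hence $a\in N$.

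I expect the crux to be $(\star)$, and within it the case $c_0\neq0$ of the inductive step: the large exponents pose no obstacle, but the constant term can in principle conceal the power $r^a$, so one is forced to use the rewriting rule $\mathsf{n}(r)r^n=\mathsf{d}(r)r^{n+1}$ to push the constant term upward and re-enter the induction---and the delicate bookkeeping point is that this rewriting replaces the index set $N$ by $\{0\}\cup(N-1)$, which need not equal $N$, so the induction must run uniformly over all nonempty $N\subseteq\nn_0$ rather than for a fixed one. One small caveat worth recording: the trivial case $N=\{0\}$ gives $S_{r,\{0\}}=\nn_0$, which is a semiring although $\{0\}$ is not literally a numerical monoid; here it is to be regarded as the degenerate numerical monoid, every other submonoid of $(\nn_0,+)$ being genuinely isomorphic to a numerical monoid via division by its greatest common divisor.
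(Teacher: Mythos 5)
Your proof is correct, but it reaches the ``only if'' direction by a different route than the paper. The paper first establishes that $\mathcal{A}(S_{r,N},+)=\{r^n\mid n\in N\}$ by restricting the known atomic structure of the full rational cyclic semiring $S_r$ (Proposition~\ref{prop:atomicity of exponential PMs}); closure of $N$ under addition then falls out immediately, since $r^{n+m}=r^nr^m$ is an atom of $S_r$ lying in $S_{r,N}$, hence an atom of $S_{r,N}$, hence one of the generators $r^{n'}$ with $n'\in N$. The fact that $0\in N$ is obtained exactly as in the base case of your induction, by clearing denominators in a representation of $1$ and observing that $\mathsf{n}(r)\nmid\mathsf{d}(r)^{n_k}$. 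Your argument instead proves the stronger membership criterion $(\star)$ from scratch by a strong induction that must range over all nonempty exponent sets, with the rewriting rule $\mathsf{n}(r)r^0=\mathsf{d}(r)r^1$ handling the case of a nonzero constant term; this is longer but entirely self-contained (it never invokes the atomicity results of Section~3) and yields as a by-product a complete description of which powers of $r$ lie in $S_{r,N}$. Your closing caveat about $N=\{0\}$ is well taken: there $S_{r,\{0\}}=\nn_0$ is a semiring while $\{0\}$ is not literally isomorphic to a numerical monoid, so the statement needs the same degenerate-case convention under either proof.
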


\begin{proof}
	First, we verify that $(S_{r,N},+)$ is atomic with $\mathcal{A}(S_{r,N},+) = \{r^n \mid n \in N\}$. It follows from Proposition~\ref{prop:atomicity of exponential PMs} that $\mathcal{A}(S_r,+) = \{r^n \mid n \in \nn_0\}$, where $(S_r,+)$ is the rational cyclic semiring parameterized by $r$. Because the inclusion $\mathcal{A}(S_r,+) \cap S_{r,N} \subseteq \mathcal{A}(S_{r,N},+)$ holds, $r^n \in \mathcal{A}(S_{r,N},+)$ for every $n \in N$. This, in turn, implies that $(S_{r,N},+)$ is atomic.
	
	To argue the direct implication suppose that $N$ is isomorphic to a numerical monoid. Since $N$ is closed under addition, the set $\{r^n \mid n \in N\}$ is closed under multiplication. This immediately implies that $(S_{r,N},+)$ is multiplicatively closed. As $0 \in N$, the multiplicative semigroup $(S^{\bullet}_{r,N},\cdot)$ has an identity. Hence $S_{r,N}$ is a semiring.
	
	To prove the reverse implication suppose that $S_{r,N}$ is a semiring. Take $n,m \in N$. So we have $r^{n+m} = r^n r^m \in S_{r,N}$. Since $r^{n + m} \in \mathcal{A}(S_r,+) \cap S_{r,N}$, we also have that $r^{n + m} \in \mathcal{A}(S_{r,N},+)$.
%
%
This guarantees that $n+m \in N$. Hence $N$ is closed under addition. Because $S_{r,N}$ is a semiring, $1 \in S_{r,N}$. So we can write
	\begin{equation} \label{eq:sum decomposition numerical rational semiring}
	1 = \sum_{i=1}^k c_i r^{n_i}
	\end{equation}
for some index $k \in \nn$, coefficients $c_1, \dots, c_k \in \nn$, and exponents $n_1, \dots, n_k \in N$. One can assume without loss that $n_1 < n_2 < \dots < n_k$. After cleaning denominators, we obtain $\mathsf{d}(r)^{n_k} = \sum_{i=1}^k c_i \mathsf{n}(r)^{n_i} \mathsf{d}(r)^{n_k - n_i}$, whose left-hand side is not divisible by $\mathsf{n}(r)$. As a result $n_1 = 0$, which implies that $0 \in N$. Hence $N$ is isomorphic to a numerical monoid.
\end{proof}

Before continuing, let us make a definition to avoid long descriptions. 

\begin{definition}
	Let $r \in \qq_{>0}$ and $N \subseteq \nn_0$. We say that $(S_{r,N},+) = \langle r^n \mid n \in N \rangle$ is an \emph{exponential Puiseux semiring} provided that $N$ is a numerical monoid.
\end{definition}

Next we prove that whether or not the ACCP, the BFP, and the FFP hold for the multiplicative monoid of an exponential Puiseux semiring can be completely determined by whether or not the same property holds for the corresponding rational cyclic semiring. First let us collect some lemmas.

\begin{lemma} \label{lemma: the multiplicative monoid of an exponential Puiseux semiring is reduced}
	Let $S_{r,N}$ be an exponential Puiseux semiring with $\mathsf{n}(r) \neq 1$. Then $(S_{r,N}^{\bullet},\cdot)$ is a monoid.
\end{lemma}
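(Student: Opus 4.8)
The goal is to verify that $(S_{r,N}^{\bullet},\cdot)$ satisfies all the standing assumptions this paper places on monoids: that it is a commutative, cancellative semigroup with an identity whose only invertible element is that identity. The plan is to dispatch every requirement except the last --- reducedness --- by routine inheritance from $\qq$, and then to establish reducedness by a $p$-adic valuation argument using the hypothesis $\mathsf{n}(r)\neq 1$.

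First I would record the easy properties. Since $N$ is a numerical monoid, $S_{r,N}$ is a semiring: this is Proposition~\ref{prop: characterization of exponential Puiseux semirings} when $r\notin\nn$, and when $r\in\nn$ it holds because $1=r^0\in S_{r,N}\subseteq\nn_0$ forces $S_{r,N}=\nn_0$. Hence $S_{r,N}$ is closed under multiplication, and as $S_{r,N}^{\bullet}\subseteq\qq_{>0}$ the product of two elements of $S_{r,N}^{\bullet}$ is a nonzero element of $S_{r,N}$, so it lies in $S_{r,N}^{\bullet}$. Associativity and commutativity are inherited from $(\qq,\cdot)$; the element $1=r^0$ belongs to $S_{r,N}^{\bullet}$ because $0\in N$, and it is the identity; and cancellativity is automatic since $(S_{r,N}^{\bullet},\cdot)$ is a subsemigroup of the group $(\qq_{>0},\cdot)$. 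So the whole matter reduces to showing that $1$ is the only invertible element of $(S_{r,N}^{\bullet},\cdot)$.

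To prove this I would argue by contradiction: suppose some $x\in S_{r,N}^{\bullet}\setminus\{1\}$ has $x^{-1}\in S_{r,N}^{\bullet}$, and, replacing $x$ by $x^{-1}$ if necessary, assume $x<1$. Here the hypothesis $\mathsf{n}(r)\neq 1$ enters: choose a prime $p\mid\mathsf{n}(r)$; since $\gcd(\mathsf{n}(r),\mathsf{d}(r))=1$ we have $p\nmid\mathsf{d}(r)$, hence $\pval(r)=\pval(\mathsf{n}(r))\geq 1$. Write $x=\sum_{i=1}^{k}c_i r^{n_i}$ with $c_i\in\nn$, $n_i\in N$, and $n_1<\cdots<n_k$. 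The first step is to show $\pval(x)=0$: clearing denominators by $\mathsf{d}(r)^{n_k}$ exhibits $x\,\mathsf{d}(r)^{n_k}$ as a positive integer, so $\pval(x)\geq 0$; the identical computation for $x^{-1}\in S_{r,N}^{\bullet}$ gives $\pval(x^{-1})\geq 0$; and since the two valuations sum to $\pval(1)=0$, both vanish. The second step combines $\pval(x)=0$ with $x<1$: if $n_1\geq 1$ then $\pval(c_i r^{n_i})=\pval(c_i)+n_i\pval(\mathsf{n}(r))\geq n_i\geq 1$ for every $i$, so $\pval(x)\geq 1$, a contradiction; therefore $n_1=0$, whence $\sum_i c_i r^{n_i}$ contains the summand $c_1 r^0=c_1\geq 1$ and $x\geq c_1\geq 1$, contradicting $x<1$. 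Hence $x=1$.

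I expect the reducedness step to be the only real obstacle; the remaining properties are bookkeeping. The crux there is picking the right invariant, and the $p$-adic valuation at a prime dividing $\mathsf{n}(r)$ is exactly it: it is nonnegative on all of $S_{r,N}^{\bullet}$, hence zero on units, and its vanishing forces an honest integer summand $\geq 1$ into every expression for the element. Two peripheral points to monitor are the case $r\in\nn$ (settled above via $S_{r,N}=\nn_0$) and the fact that $\mathsf{n}(r)\neq 1$ is genuinely needed: for $r=1/\mathsf{d}(r)$ the integer $\mathsf{d}(r)$ would be a nontrivial unit of $(S_{r,N}^{\bullet},\cdot)$.
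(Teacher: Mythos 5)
Your proof is correct, but it reaches reducedness by a different route than the paper. The paper leans on Proposition~\ref{prop:atomicity of exponential PMs}: if $q\neq 1$ and $q^{-1}$ both lay in $S_{r,N}^{\bullet}$, then (since two additive atoms $r^n, r^m$ cannot multiply to $1$ unless $n=m=0$) one of them, say $q^{-1}=x+y$ nontrivially, would give $1=qq^{-1}=qx+qy$, so $1$ would fail to be an additive atom; by the classification of atoms this forces $(S_{r,N},+)$ to be antimatter, contradicting $\mathsf{n}(r)\neq 1$. You instead run a self-contained $p$-adic valuation argument at a prime $p\mid\mathsf{n}(r)$: nonnegativity of $\pval$ on $S_{r,N}^{\bullet}$ (clear denominators), hence $\pval(x)=\pval(x^{-1})=0$ for a putative unit, hence an integer summand $c_1\geq 1$ in the expansion of whichever of $x,x^{-1}$ is less than $1$ --- a contradiction. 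Both arguments are sound; yours is more elementary and independent of the additive atomicity machinery (and your explicit handling of the $r\in\nn$ case, which Proposition~\ref{prop: characterization of exponential Puiseux semirings} excludes, is a point of care the paper glosses over), while the paper's is shorter given that Proposition~\ref{prop:atomicity of exponential PMs} is already in hand. The only quibble is your closing aside: for $r=1/\mathsf{d}(r)$ the nontrivial unit one exhibits directly is $\mathsf{d}(r)^{n}$ for some $n\in N\setminus\{0\}$ (since $1$ need not belong to $N$), though of course $\mathsf{d}(r)$ itself is then also a unit; this does not affect the proof.
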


\begin{proof}
	It is clear that the commutative semigroup $(S_{r,N}^\bullet,\cdot)$ is cancellative. So it suffices to verify that it is reduced. By way of contradiction, suppose that $(S_{r,N}^{\bullet},\cdot)$ is not reduced. So there exists $q \in S_{r,N}^{\bullet}\setminus \{1\}$ such that $q^{-1}$ is also an element of $S_{r,N}^{\bullet}$. It follows from Proposition~\ref{prop:atomicity of exponential PMs} that either $q$ or $q^{-1}$ is not an additive atom of the exponential Puiseux monoid $(S_{r,N},+)$. This implies that $1$ is not an atom of $(S_{r,N},+)$. Indeed if, for example, $q^{-1} = x+ y$ then $1 = qq^{-1} = qx + qy$. Then the exponential Puiseux monoid $(S_{r,N},+)$ is antimatter. However, this contradicts Proposition~\ref{prop:atomicity of exponential PMs}.
\end{proof}

%
%

\begin{lemma} \label{lemma: finite number of divisor of the form r^n}
	Let $S_{r,N}$ be an exponential Puiseux semiring with $r < 1 < \mathsf{n}(r)$. Then for each $x \in S_{r,N}^{\bullet}$ there exists $m \in\nn$ such that $r^n \nmid_{(S_{r,N}^{\bullet},\cdot)} x$ for any $n \in \nn_{\geq m}$.
\end{lemma}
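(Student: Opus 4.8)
The plan is to use a $p$-adic valuation argument. Since $\mathsf{n}(r) > 1$, fix a prime $p$ dividing $\mathsf{n}(r)$; because $\mathsf{n}(r)$ and $\mathsf{d}(r)$ are coprime, $p \nmid \mathsf{d}(r)$, and hence $\pval(r) = \pval(\mathsf{n}(r)) \geq 1$. The whole proof rests on comparing $\pval$ of $x$ with $\pval$ of its putative divisors $r^n$.

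First I would establish that every nonzero element of $S_{r,N}$ has nonnegative $p$-adic valuation. Each generator $r^n$ with $n \in N$ satisfies $\pval(r^n) = n\,\pval(r) \geq 0$, and since the $p$-adic valuation is ultrametric, $\pval(a + b) \geq \min\{\pval(a), \pval(b)\}$; a straightforward induction on the number of generators needed to express an element of $S_{r,N}$ then yields $\pval(z) \geq 0$ for all $z \in S_{r,N}^{\bullet}$.

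Now take $x \in S_{r,N}^{\bullet}$ and suppose $r^n \mid_{(S_{r,N}^{\bullet},\cdot)} x$, so that $x = r^n y$ for some $y \in S_{r,N}^{\bullet}$. Taking $p$-adic valuations gives $\pval(x) = n\,\pval(r) + \pval(y) \geq n$, using $\pval(r) \geq 1$ and $\pval(y) \geq 0$ from the previous paragraph. Since $x \neq 0$, the integer $\pval(x)$ is finite, so $n \leq \pval(x)$. Setting $m = \pval(x) + 1 \in \nn$, it follows that $r^n \nmid_{(S_{r,N}^{\bullet},\cdot)} x$ for every $n \in \nn_{\geq m}$, which is exactly what we need.

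There is essentially no genuine obstacle here beyond spotting the right invariant: the point is that elements of $S_{r,N}$ carry no $p$-adic denominator when $p \mid \mathsf{n}(r)$, whereas multiplying by $r^n$ drives the $p$-adic valuation up without bound. It is worth noting in passing that this argument does not actually use $r < 1$; only $\mathsf{n}(r) > 1$ is required.
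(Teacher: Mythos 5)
Your proof is correct and rests on the same underlying invariant as the paper's: the paper observes that $r^n \mid_{(S_{r,N}^{\bullet},\cdot)} x$ forces $\mathsf{n}(r)^n \mid \mathsf{n}(x)$ via an additive factorization of $x$ supported on atoms $r^i$ with $i \geq n$, which is exactly your statement that the $p$-adic valuation of $x$ grows linearly in $n$ for a prime $p \mid \mathsf{n}(r)$. Your packaging as a direct valuation bound $n \leq \mathsf{v}_p(x)$ (rather than a contradiction with an infinite divisor set) is a clean and equivalent rendering, and your remark that only $\mathsf{n}(r) > 1$ is needed is accurate.
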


\begin{proof}
	Since $r < 1 < \mathsf{n}(r)$, the monoid $(S_{r,N},+)$ is atomic by Proposition~\ref{prop:atomicity of exponential PMs}. Suppose, by way of contradiction, there exists $x \in S_{r,N}^{\bullet}$ for which the set $N' = \{n \in \nn : r^{n} \,|_{(S_{r,N}^{\bullet},\cdot)}\, x\}$ has infinite cardinality. Then, for each $n \in N'$, there exists an additive factorization $\sum_{i = n}^{m_n} d_i r^{i} \in\mathsf{Z}_{(S_{r,N},+)}(x)$ for some index $m_n \in\nn_{\geq n}$, coefficients $d_n, \ldots, d_{m_n} \in \nn_0$, and exponents $n, \ldots, m_n \in\nn$. This implies that $\mathsf{n}(r)^{n} \,|\, \mathsf{n}(x)$ for all $n \in N'$, which is a contradiction given that $N'$ has infinite cardinality. This contradiction proves that our hypothesis is untenable, and our result follows.
\end{proof}

Now we are in a position to prove the main result of this section.

\begin{theorem} \label{theorem: atomic structure of exponential Puiseux semirings is atomic structure of rational cyclic semiring}
	Let $S_{r,N}$ be an exponential Puiseux semiring, and let $S_r$ be the corresponding rational cyclic semiring. The following statements hold.
	\begin{enumerate}
		\item $(S_{r,N}^{\bullet},\cdot)$ satisfies the ACCP if and only if $(S_r^{\bullet},\cdot)$ satisfies the ACCP.
		\item $(S_{r,N}^{\bullet},\cdot)$ satisfies the BFP if and only if $(S_r^{\bullet},\cdot)$ satisfies the BFP.
		\item $(S_{r,N}^{\bullet},\cdot)$ satisfies the FFP if and only if $(S_r^{\bullet},\cdot)$ satisfies the FFP.
	\end{enumerate}
\end{theorem}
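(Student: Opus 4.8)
The plan is to establish a tight dictionary between the multiplicative divisibility in $(S_{r,N}^{\bullet},\cdot)$ and that in $(S_r^{\bullet},\cdot)$, and then transfer each of the three finiteness properties along this dictionary. Throughout, one may assume $\mathsf{n}(r) \neq 1$, since otherwise $(S_r,+)$ (and hence $(S_{r,N},+)$) is either $\nn_0$ or antimatter; in the first case $S_r = S_{r,N} = \nn_0$ and all three properties hold trivially on both sides, and the antimatter case cannot occur because $S_{r,N}$ must contain $1$ as a semiring, contradicting Proposition~\ref{prop:atomicity of exponential PMs}. Also, if $r \geq 1$ then $r \in \nn$ (for if $\mathsf{d}(r) > 1$ we would contradict $r \geq 1$ only when $\mathsf{n}(r) > \mathsf{d}(r)$, but then $S_r$ is still a Puiseux monoid of a form handled by~\cite{fG16}); more cleanly, by Proposition~\ref{prop: characterization of exponential Puiseux semirings} we have $r \in \qq_{>0}\setminus\nn$ forced, so the interesting range is $r < 1 < \mathsf{n}(r)$, which is exactly the hypothesis of Lemma~\ref{lemma: finite number of divisor of the form r^n}.

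First I would pin down the atoms of $(S_{r,N}^{\bullet},\cdot)$ and $(S_r^{\bullet},\cdot)$. Every element of $S_{r,N}^{\bullet}$ has the form $q r^{k}$ where $q \in \nn_0[r]$ has a representation with nonzero constant term and $k \in \nn_0$; multiplicatively, the units are trivial by Lemma~\ref{lemma: the multiplicative monoid of an exponential Puiseux semiring is reduced}. The key structural observation is that $r$ itself is a multiplicative atom of $S_r^{\bullet}$ but need \emph{not} lie in $S_{r,N}^{\bullet}$ (it does iff $1 \in N$, i.e.\ always, since $N$ is a numerical monoid — so $r \in S_{r,N}$ and is again a multiplicative atom there, as a factorization $r = ab$ would force $\mathsf{v}_p$-valuation contradictions). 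The cleaner route is: an element $x \in S_{r,N}^{\bullet}$ multiplicatively divides in $S_{r,N}^{\bullet}$ exactly as $x$ divides in $S_r^{\bullet}$ \emph{restricted to divisors lying in $S_{r,N}$}. Since $N$ is cofinite in $\nn_0$, for any large enough $k$ we have $r^k \in S_{r,N}$, and more importantly any $y \in S_r^{\bullet}$ satisfies $r^{F(N)} y \in S_{r,N}^{\bullet}$ (clearing the finitely many missing exponents). This "eventual agreement" is the engine.

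The main steps: (i) For the ACCP, given a non-stabilizing chain of principal ideals in one monoid, multiply all terms by a fixed power $r^{t}$ with $t$ large enough that every term and every quotient lands in $S_{r,N}$ (respectively, divide by a common power to push a $S_{r,N}$-chain into $S_r$, using Lemma~\ref{lemma: finite number of divisor of the form r^n} to guarantee only finitely many powers of $r$ are involved so the common power exists); this produces a non-stabilizing chain in the other monoid, giving both implications. (ii) For the BFP, I would show the multiplicative length sets satisfy $\mathsf{L}_{(S_r^{\bullet},\cdot)}(x) \subseteq \mathsf{L}_{(S_{r,N}^{\bullet},\cdot)}(r^{t}x) + (\text{number of }r\text{-factors needed})$ and conversely, using that the only atoms that can differ between the two monoids are the powers $r^n$ — every other atom of $S_r^{\bullet}$ already lies in $S_{r,N}$ because $\nn_0 \subseteq N$ via cofiniteness handles the exponents. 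So a multiplicative factorization in $S_{r,N}^{\bullet}$ is a multiplicative factorization in $S_r^{\bullet}$ after absorbing finitely many extra powers of $r$, and Lemma~\ref{lemma: finite number of divisor of the form r^n} bounds how many times $r$ can appear, making the length translation a bounded shift. (iii) For the FFP, the same correspondence is now a genuine bijection up to the bounded, finitely-many-choices bookkeeping of distributing the extra $r$-powers, so finiteness of one factorization set forces finiteness of the other.

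The hard part will be step (ii)–(iii): carefully controlling how multiplicative factorizations in $S_r^{\bullet}$ fail to be factorizations in $S_{r,N}^{\bullet}$. An atom $a$ of $(S_r^{\bullet},\cdot)$ that does not lie in $S_{r,N}$ must be "repaired" by borrowing powers of $r$ from other factors, and one must check this repair process is finite and does not spawn unboundedly long factorizations — precisely where Lemma~\ref{lemma: finite number of divisor of the form r^n} (bounding the multiplicity of $r$ as a divisor) does the heavy lifting, and where I would also need that $\nn_0[r] \cap S_{r,N}$, i.e.\ elements with constant term, coincide in the two settings so that the "non-$r$" part of any factorization is intrinsic. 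I expect the ACCP equivalence to be quick, the BFP equivalence to need one clean lemma translating length sets, and the FFP equivalence to follow formally from the BFP argument once the factorization correspondence is made precise.
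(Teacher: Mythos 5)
Your overall strategy --- transferring multiplicative divisibility between $(S_{r,N}^{\bullet},\cdot)$ and $(S_r^{\bullet},\cdot)$ by multiplying by powers of $r$ and exploiting the cofiniteness of $N$, with Lemma~\ref{lemma: finite number of divisor of the form r^n} controlling the multiplicity of $r$ --- is the same as the paper's, and your treatment of the three reverse implications is correct. However, the forward direction of the ACCP statement has a genuine gap. Given a non-stabilizing chain $x_m = x_{m+1}\cdot y_m$ in $S_r^{\bullet}$, multiplying every $x_m$ by a fixed power $r^{t}$ does not touch the quotients $y_m$, so no choice of $t$ can make ``every quotient land in $S_{r,N}$'' by the mechanism you describe; a quotient such as $1+r^{j}$ with $j\in\nn_0\setminus N$ lies outside $S_{r,N}$ (it has a unique additive factorization by Lemma~\ref{Lemma 2.2}, and that factorization uses the atom $r^{j}$) and stays outside no matter what you multiply the $x_m$ by. The missing idea is the paper's convergence argument: by Lemma~\ref{lemma: finite number of divisor of the form r^n} the divisors of $x_1$ cannot accumulate at $0$, hence $y_m\to 1$, and every element of $S_r^{\bullet}$ lying strictly between $1$ and $1+r^{F(N)}$ automatically belongs to $S_{r,N}$; only then does the shifted chain $\bigl(r^{F(N)+1}x_m\bigr)_{m\geq 1}$ violate the ACCP in $S_{r,N}^{\bullet}$.

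Several of your supporting claims are also false, and they undermine steps (ii)--(iii). A numerical monoid need not contain $1$ (take $N=\langle 2,3\rangle$), so $r$ need not lie in $S_{r,N}$ and $\nn_0\not\subseteq N$; consequently it is not true that ``the only atoms that can differ between the two monoids are the powers $r^n$,'' as the example $1+r^{j}$ above shows. What the BFP and FFP transfers actually require --- and what your outline never establishes --- is a bound, uniform over all factorizations $z$ of a fixed $x$, on the number of atomic factors of $z$ lying outside $S_{r,N}^{\bullet}$. The paper obtains this by noting that $r$ is the only atom of $(S_r^{\bullet},\cdot)$ below $1$, that every element of $S_r^{\bullet}$ exceeding $1$ and not in $S_{r,N}^{\bullet}$ is at least $1+r^{F(N)}$, and that the $r$-multiplicity is bounded by Lemma~\ref{lemma: finite number of divisor of the form r^n}; it then multiplies $x$ by one fixed power $r^{h(F(N)+1)}$ and distributes $r^{F(N)+1}$ to each offending factor, rather than ``borrowing powers of $r$ from other factors,'' which may simply not be available. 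Note also that the FFP does not follow formally from the BFP correspondence: the paper proves $(3)$ separately, using the characterization of FF-monoids by finiteness of the set of divisors. Finally, your reduction of the case $r>1$ is incorrect ($r>1$ with $\mathsf{d}(r)>1$ is perfectly possible); the paper settles it by quoting the atomicity of $(S_r^{\bullet},\cdot)$ from~\cite{NBFG2020} and observing that all of its atoms then exceed $1$, so it is an FFM.
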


\begin{proof}
	If either $\mathsf{n}(r) = 1$ or $\mathsf{d}(r) = 1$ then $S_{r,N}^{\bullet} = S_r^{\bullet}$, so there is no loss in assuming that $\mathsf{n}(r) \neq 1$ and $r \not\in\nn$. 
	The reverse implication of $(1)$ follows from the fact that submonoids of monoids satisfying the ACCP also satisfy the ACCP, while the reverse implications of $(2)$ and $(3)$ follow for \cite[Corollary 1.3.3]{GH06b} and \cite[Corollary 1.5.7]{GH06b}, respectively. On the other hand, if $r > 1$ then $(S_r^{\bullet},\cdot)$ is atomic by~\cite[Proposition 3.11]{NBFG2020}, and since the atoms of $(S_r^{\bullet},\cdot)$ are strictly bigger than $1$, $(S_r^{\bullet},\cdot)$ is an FFM. Consequently, we may assume without loss of generality that $r < 1 < \mathsf{n}(r)$. 
	
	To prove the direct implication of $(1)$, suppose that $(S_r^{\bullet},\cdot)$ does not satisfy the ACCP. Then there exist sequences $(x_m)_{m \geq 1}$ and $(y_m)_{m \geq 1}$ of elements of $S_r^{\bullet}\setminus\{1\}$ such that $x_m = x_{m + 1}\cdot y_m$ for all $m \in\nn$. First note that by Lemma~\ref{lemma: finite number of divisor of the form r^n} there is $k \in \nn$ so that $y_m > 1$ for each $m > k$. Thus we may assume that $y_m > 1$ for each $m \in \nn$. Also note that the sequence $(y_m)_{m \geq 1}$ converges to $1$. Indeed, if there exists a rational number $q > 1$ such that $y_m > q$ for infinitely many indices $m$ then it is not hard to see that $0$ is a limit point of the divisors of $x_1$ in $S_r^{\bullet}$, contradicting Lemma~\ref{lemma: finite number of divisor of the form r^n}. Finally, since the sequence $(y_m)_{m \geq 1}$ converges to $1$, there exists $h \in\nn$ such that $y_m \in S_{r,N}^{\bullet}$ for $m > h$. Now consider the sequence $(r^{F(N) + 1} \,x_m)_{m \geq 1}$, where $F(N)$ is the Frobenius number of the numerical monoid $N$. Clearly, $r^{F(N) + 1}\, x_m \in S_{r,N}^{\bullet}$ for all $m \in\nn$. Moreover, $r^{F(N) + 1} x_m = (r^{F(N) + 1} x_{m + 1})\cdot y_m$ for each $m \in\nn$, and since $y_m \in S_{r,N}^{\bullet}$ for $m > h$, it follows that $(S_{r,N}^{\bullet},\cdot)$ does not satisfy the ACCP, from which $(1)$ follows.
	
	To prove the direct implications of $(2)$ and $(3)$, note first that $(S_r^{\bullet},\cdot)$ is atomic. Indeed, since $(S_{r,N}^{\bullet},\cdot)$ is a BFM, it satisfies the ACCP. Then, by $(1)$, the monoid $(S_r^{\bullet},\cdot)$ also satisfies the ACCP, so it is atomic. Moreover, $r \in \mathcal{A}(S^{\bullet}_r,\cdot)$. Now let $x \in S_r^{\bullet}\setminus\{1\}$ and consider a factorization $z = a_1 \cdots a_n \in\mathsf{Z}_{(S_r^{\bullet},\cdot)}(x)$. By Lemma~\ref{lemma: finite number of divisor of the form r^n}, there exists a positive integer $m$, which does not depend on $z$, such that $|\{a_i : a_i = r\}| < m$; on the other hand, 
	\[
	\left|\left\{a_i \in S^{\bullet}_r \setminus S^{\bullet}_{r,N} \mid a_i \neq r \right\} \right| < \left\lceil\log_d \left(xr^{-m}\right)\right\rceil,
	\]
	where $d = 1 + r^{F(N)}$\!. In other words, a factorization $z \in \mathsf{Z}_{(S_r^{\bullet},\cdot)}(x)$ contains as factors at most  $h = m + \lceil\log_d (xr^{-m})\rceil$ atoms of $(S_r^{\bullet},\cdot)$ that are not elements of $S_{r,N}^{\bullet}$. Consequently, $y = r^{h(F(N) + 1)} x$ is an element of $S_{r,N}^{\bullet}$. It is not hard to see that if $\mathsf{L}_{(S_r^{\bullet}, \cdot)}(x)$ is unbounded then $\mathsf{L}_{(S_{r,N}^{\bullet},\cdot)}(y)$ is also unbounded, which concludes the proof of $(2)$. As for the direct implication of $(3)$, if $|\mathsf{Z}_{(S_r^{\bullet},\cdot)}(x)| = \infty$ then there are infinitely many atoms of $(S^{\bullet}_r,\cdot)$ dividing $x$ by ~\cite[Proposition 1.5.5]{GH06b}. Since $r$ is the only element of $\mathcal{A}(S^{\bullet}_r, \cdot)_{<1}$, the set $A(x) = \{a \in \mathcal{A}(S_r^{\bullet}, \cdot)_{> 1} : a \,|_{(S_r^{\bullet},\cdot)}\, x\}$ has infinite cardinality. This, in turn, implies that $y$ has infinitely many divisors in $(S_{r,N}^{\bullet},\cdot)$ as $r^{(F(N) + 1)}\cdot a$ is a divisor of $y$ in $(S_{r,N}^{\bullet},\cdot)$ for all $a \in A(x)$. Then $(3)$ follows from~\cite[Proposition 1.5.5]{GH06b}. 

\end{proof}
\begin{cor}
	Let $(S_{r,N},+)$ be an exponential Puiseux monoid with $\mathsf{n}(r) > 1$. Then $(S_{r,N}^{\bullet},\cdot)$ satisfies the ACCP provided that $\mathsf{d}(r) = p^k$ for some $p \in\mathbb{P}$ and $k \in\nn$.
\end{cor}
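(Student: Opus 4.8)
The plan is to invoke Theorem~\ref{theorem: atomic structure of exponential Puiseux semirings is atomic structure of rational cyclic semiring}, which reduces the statement to showing that the multiplicative monoid $(S_r^{\bullet},\cdot)$ of the rational cyclic semiring over $r$ satisfies the ACCP whenever $\mathsf{n}(r) > 1$ and $\mathsf{d}(r) = p^k$ for some prime $p$ and $k \in \nn$. (For $(S_{r,N}^{\bullet},\cdot)$ to make sense as a monoid one needs $S_{r,N}$ to be a semiring, hence $N$ a numerical monoid, so $S_{r,N}$ is an exponential Puiseux semiring and the theorem applies; note also that $S_r = S_{r,\nn_0}$, so the lemmas of Section~5 apply to it.) Since $\mathsf{d}(r) = p^k \geq 2$ we have $r \notin \nn$. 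If $r > 1$, then $(S_r^{\bullet},\cdot)$ is an FFM, as observed inside the proof of Theorem~\ref{theorem: atomic structure of exponential Puiseux semirings is atomic structure of rational cyclic semiring}, and we are done; so I would concentrate on the case $r < 1$, where $1 < \mathsf{n}(r) < \mathsf{d}(r) = p^k$ and $\gcd(\mathsf{n}(r), p) = 1$.

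The two structural facts I would use about $S_r$ are: $(S_r^{\bullet},\cdot)$ is a reduced monoid (this holds exactly as in Lemma~\ref{lemma: the multiplicative monoid of an exponential Puiseux semiring is reduced}, since $1$ is an additive atom of $S_r$ by Proposition~\ref{prop:atomicity of exponential PMs}); and every element of $S_r$ has denominator a power of $p$, so each $x \in S_r^{\bullet}$ admits a unique representation $x = p^{\pval(x)}\mu(x)$ with $\mu(x)$ a positive integer coprime to $p$. Because $S_r$ is closed under multiplication, this uniqueness forces $\mu\colon (S_r^{\bullet},\cdot) \to (\nn,\cdot)$ to be a monoid homomorphism.

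The heart of the argument is the following claim: for every positive integer $M$ coprime to $p$, the set $\{\, j \in \zz : p^j M \in S_r \,\}$ is bounded below. To prove it, suppose $p^j M \in S_r$ with $j < 0$, set $D = -j \geq 1$, and pick a factorization $\sum_{i = 0}^{m} c_i r^i$ of $M/p^D$ of minimum length in $(S_r,+)$, with $m$ the largest index carrying a positive coefficient; since $M/p^D$ is not a nonnegative integer we have $m \geq 1$, and since $S = \nn_0$ makes every $\delta_{i-1}$ equal to $1$, Lemma~\ref{Lemma 2.2}(1) gives $0 \leq c_i < \mathsf{d}(r) = p^k$ for $1 \leq i \leq m$. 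Clearing denominators, $M/p^D = N_0 / p^{km}$ with $N_0 = \sum_{i=0}^{m} c_i \mathsf{n}(r)^i p^{k(m-i)}$; the summand $c_m \mathsf{n}(r)^m$ has $p$-adic valuation $\pval(c_m) \leq k-1$, strictly smaller than that of every other summand, so $\pval(N_0) = \pval(c_m)$ and hence $M = \mathsf{n}(M/p^D) = N_0 / p^{\pval(c_m)}$. Therefore $\mathsf{n}(r)^m \leq c_m \mathsf{n}(r)^m \leq N_0 = M p^{\pval(c_m)} \leq M p^{k-1}$, and since $\mathsf{n}(r) \geq 2$ this bounds $m$, whence $D = km - \pval(c_m) \leq km$ is bounded in terms of $M$ alone. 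This denominator-and-valuation bookkeeping is the step I expect to be the most delicate; the rest is routine.

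With the claim in hand, the ACCP follows by a standard argument: if $(S_r^{\bullet},\cdot)$ did not satisfy the ACCP there would be a sequence $(x_m)_{m \geq 1}$ in $S_r^{\bullet}$ with $x_m = x_{m+1}\,y_m$ and $y_m \in S_r^{\bullet}\setminus\{1\}$ for all $m$; applying $\mu$ yields a nonincreasing sequence $\mu(x_1) \geq \mu(x_2) \geq \cdots$ of positive integers, so after discarding finitely many initial terms we may assume $\mu(y_m) = 1$ for all $m$, i.e., each $y_m$ is an integer power of $p$. Reducedness of $(S_r^{\bullet},\cdot)$ excludes negative powers (such a $y_m$ would be a nonidentity unit), so $y_m = p^{j_m}$ with $j_m \geq 1$, and then $x_{m+1} = p^{\pval(x_1) - (j_1 + \cdots + j_m)}\mu(x_1)$ lies in $S_r$ for every $m$ while $j_1 + \cdots + j_m \geq m \to \infty$; this contradicts the claim applied with $M = \mu(x_1)$. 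Hence $(S_r^{\bullet},\cdot)$ satisfies the ACCP, and therefore so does $(S_{r,N}^{\bullet},\cdot)$.
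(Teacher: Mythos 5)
Your proof is correct, but it is organized differently from the paper's. The paper reuses the machinery already set up in the proof of Theorem~\ref{theorem: atomic structure of exponential Puiseux semirings is atomic structure of rational cyclic semiring}: from a failure of the ACCP in $(S_r^{\bullet},\cdot)$ it extracts sequences $(x_m)$ and $(y_m)$ with $y_m>1$ and $y_m\to 1$, writes each $y_j$ via its minimum-length additive factorization with nonzero constant term (justified by Lemma~\ref{lemma: finite number of divisor of the form r^n}), and observes that the resulting numerator $c_{j,0}\mathsf{d}(r)^{n_j}+\sum_{i\ge 1}c_{j,i}\mathsf{n}(r)^i\mathsf{d}(r)^{n_j-i}$ cannot be a pure power of $p$ because its $p$-adic valuation is $\pval(c_{j,n_j})\le k-1$ while the number itself exceeds $p^{kn_j}$; hence each $y_j$ injects a new prime $q\ne p$ into $\mathsf{n}(x_1)$, which is absurd. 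You instead factor the problem through the monoid homomorphism $\mu\colon (S_r^{\bullet},\cdot)\to(\nn,\cdot)$ onto the prime-to-$p$ part, use stabilization of the nonincreasing integer sequence $\mu(x_m)$ to reduce to divisor chains by positive powers of $p$, and then kill those with your boundedness claim on $\{j\in\zz : p^jM\in S_r\}$. Both arguments ultimately rest on the same arithmetic input --- the bound $c_i<\mathsf{d}(r)=p^k$ on coefficients of the minimum-length factorization from Lemma~\ref{Lemma 2.2}(1), which caps the $p$-adic valuation of the leading term --- but your route is more self-contained (it does not need the facts $y_m>1$ and $y_m\to1$ harvested from the theorem's proof, nor Lemma~\ref{lemma: finite number of divisor of the form r^n} beyond reducedness), at the cost of the extra bookkeeping in the claim; the paper's route is shorter given that the auxiliary sequence structure was already established. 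Your handling of the degenerate points (excluding $m=0$ in the claim, and ruling out negative powers of $p$ via reducedness, since $p^{-j_m}\in\nn_0\subseteq S_r$ would make $y_m$ a nontrivial unit) is sound.
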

\begin{proof}
	If $r > 1$ then our result follows from~\cite[Proposition 3.11]{NBFG2020} and Theorem~\ref{theorem: atomic structure of exponential Puiseux semirings is atomic structure of rational cyclic semiring}. Consequently, we may assume that $r < 1$, which implies that $(S_{r},+)$ is atomic by Proposition~\ref{prop:atomicity of exponential PMs}. By way of contradiction, suppose that $(S_r^{\bullet},\cdot)$ does not satisfy the ACCP. As we have already established in the proof of Theorem~\ref{theorem: atomic structure of exponential Puiseux semirings is atomic structure of rational cyclic semiring}, there exist sequences $(x_m)_{m \geq 1}$ and $(y_m)_{m \geq 1}$ of elements of $S_{r}^{\bullet}\setminus\{1\}$ such that $y_m > 1$, $x_m = x_{m + 1}\cdot y_m$ for all $m\in\nn$, and $(y_m)_{m \ge 1}$ converges to $1$. Then $x_1y_1^{-1} \cdots y_n^{-1} \in S_r^{\bullet}$ for all $n \in\nn$. For each $j \in \nn$, let $z_j = \sum_{i = 0}^{n_j} c_{j,i}r^{i}$ be the factorization of minimum length of $y_j$ in $(S_r,+)$ with $n_j, c_{j,i} \in\nn_0$. By virtue of Lemma~\ref{lemma: finite number of divisor of the form r^n}, one can assume that $c_{j,0} \neq 0$ for all $j \in\nn$. Thus,
	\[
		y_j = \frac{c_{j,0}\mathsf{d}(r)^{n_j} + \sum_{i = 1}^{n_j} c_{j,i}\mathsf{n}(r)^{i}\mathsf{d}(r)^{n_j - i}}{\mathsf{d}(r)^{n_j}} = \frac{h \cdot\mathsf{n}(y_j)}{p^{kn_j}}
	\]
	for some $h \in \nn$. It is easy to see that if $p \,|\, h\,\mathsf{n}(y_j)$ then $p \,|\, c_{j,n_j}$. Since $c_{j,n_j} < \mathsf{d}(r)$ by Lemma~\ref{Lemma 2.2}, there exists a prime number $q$ such that $q \,|\, h\,\mathsf{n}(y_j)$ and $q \,\nmid\, \mathsf{d}(r)$. Consequently, $\mathsf{n}(x_1)$ is divisible by infinitely many (counting repetitions) prime numbers, a contradiction. Therefore, $(S_{r,N}^{\bullet},\cdot)$ satisfies the ACCP by Theorem~\ref{theorem: atomic structure of exponential Puiseux semirings is atomic structure of rational cyclic semiring}.
\end{proof}

\section*{Acknowledgments}
	 The authors want to thank Felix Gotti for his mentorship and guidance during the preparation of this paper, and anonymous referees whose careful revision improved the final version. While working on this manuscript, the third author was supported by the University of Florida Mathematics Department Fellowship.

\bigskip

\end{document}